\newcommand{\Size}{6 mm}
\tikzset{ Square/.style = { 
inner sep = 0pt, minimum width = \Size, minimum height = \Size, draw=black, fill = none, align = center 
} }
\theoremstyle{definition}
\newtheorem{definition}{Definition}[section]
\newtheorem*{remark}{Remark}
\newtheorem*{problem}{Open problem}
\theoremstyle{plain}
\newtheorem{lemma}[definition]{Lemma}
\newtheorem{proposition}[definition]{Proposition}
\newtheorem{theorem}[definition]{Theorem}
\newcommand{\hor}{\circ}
\newcommand{\ver}{\bullet}
\newcommand{\liel}{[}
\newcommand{\lier}{]}
\newcommand{\jorl}{\{}
\newcommand{\jorr}{\}}
\newcommand{\sg}{\sigma}
\begin{document}

\title{LIE AND JORDAN PRODUCTS IN INTERCHANGE ALGEBRAS}

\author{Murray Bremner}

\address{Department of Mathematics and Statistics, University of Saskatchewan, Canada}

\email{bremner@math.usask.ca}

\author{Sara Madariaga}

\address{Department of Mathematics and Statistics, University of Saskatchewan, Canada}

\email{sara.madariaga@unirioja.es} 

\subjclass[2010]{Primary 17A30. Secondary 17A50, 17B60, 17C50, 18D50.}

\keywords{Interchange algebras, polarization of operations, polynomial identities, computer algebra, 
representation theory of the symmetric group.}

\begin{abstract}
We study Lie brackets and Jordan products derived from associative operations $\hor, \ver$ 
satisfying the interchange identity 
$(w \ver x ) \hor ( y \ver z ) \equiv (w \hor y ) \ver ( x \hor z )$.
We use computational linear algebra, based on the representation theory of the symmetric group, 
to determine all polynomial identities of degree $\le 7$ relating 
(i) the two Lie brackets, (ii) one Lie bracket and one Jordan product, and (iii) the two Jordan products.
For the Lie-Lie case, there are two new identities in degree 6 and another two in degree 7.
For the Lie-Jordan case, there are no new identities in degree $\le 6$ and a complex set of
new identities in degree 7.
For the Jordan-Jordan case, there is one new identity in degree 4, two in degree 5,
and complex sets of new identities in degrees 6 and 7.
\end{abstract}

\maketitle

\thispagestyle{empty}

%%%%%%%%%%%%%%%%%%%%%%%%%%%%%%%%%%%%%%%%%%%%%%%%%%%%%%%%%%%%%%%%%%%%%%%%%%%%%%%%%%%%%%%%%%%%%%%%%%%%%%%%%%%%
%%%%%%%%%%%%%%%%%%%%%%%%%%%%%%%%%%%%%%%%%%%%%%%%%%%%%%%%%%%%%%%%%%%%%%%%%%%%%%%%%%%%%%%%%%%%%%%%%%%%%%%%%%%%
%%%%%%%%%%%%%%%%%%%%%%%%%%%%%%%%%%%%%%%%%%%%%%%%%%%%%%%%%%%%%%%%%%%%%%%%%%%%%%%%%%%%%%%%%%%%%%%%%%%%%%%%%%%%

\section{Introduction}

\subsection*{Associative, Lie, and Jordan algebras}

An associative algebra is a vector space $A$ over a field $\mathbb{F}$ with a bilinear product 
$A \times A \to A$ denoted $(a,b) \mapsto ab$ satisfying associativity $(ab)c \equiv a(bc)$ 
where the symbol $\equiv$ means that the equation holds for all values of the indeterminates $a, b, c$.
If we replace the associative product $ab$ by the Lie bracket $[a,b] = ab - ba$, or the Jordan product
$\{a,b\} = ab + ba$, then we obtain a Lie algebra $A^-$, or a (special) Jordan algebra $A^+$.
Every polynomial identity satisfied by the Lie bracket in every associative algebra follows from 
anticommutativity and the Jacobi identity $[a,[b,c]] + [b,[c,a]] + [c,[a,b]] \equiv 0$.
However, there are `special' identities, occurring first in degree 8, satisfied by the Jordan product 
in every associative algebra which do not follow from commutativity and the Jordan identity 
$\{a,\{b,\{a,a\}\}\} \equiv \{\{a,b\},\{a,a\}\}$.

\subsection*{Two associative products}

Loday \cite{Z} describes of a number of algebraic structures with two associative 
products $\hor$, $\ver$ related by various polynomial identities:
\begin{enumerate}
\item
\emph{2-associative algebras}, in which the products satisfy no identities other than associativity; 
in this paper we use the name \emph{AA algebras}.
\item
\emph{Dual 2-associative algebras}, which satisfy these identities:
\[
a \hor ( b \ver c ) \equiv 0, \qquad
a \ver ( b \hor c ) \equiv 0, \qquad
( a \hor b ) \ver c \equiv 0, \qquad
( a \ver b ) \hor c \equiv 0.
\]
\item \label{dup}
\emph{Duplicial algebras} (or \emph{L-algebras}), which satisfy 
$( a \hor b ) \ver c \equiv a \hor ( b \ver c )$.
\item
\emph{Dual duplicial algebras}, which satisfy these identities:
\[
a \hor ( b \ver c ) \equiv ( a \hor b ) \ver c, \qquad
a \ver ( b \hor c ) \equiv 0, \qquad
( a \ver b ) \hor c \equiv 0.
\]
\item \label{As(2)}
\emph{As(2)-algebras}, which satisfy $( a \ast b ) \ast' c \equiv a \ast ( b \ast' c )$ for all 
$\ast, \ast' \in \{ \hor, \ver \}$; the corresponding nonsymmetric operad is self-dual.
\item \label{2com}
\emph{2-compatible algebras} (or \emph{As[2]-algebras}) in which any linear combination of the products 
is associative; this condition is equivalent to this identity:
\[
a \hor ( b \ver c ) + a \ver ( b \hor c )
\equiv
( a \hor b ) \ver c + ( a \ver b ) \hor c.
\]
\item \label{dual2com}
\emph{Dual 2-compatible algebras}, which satisfy the identities of items \eqref{As(2)} and \eqref{2com}.
\item
\emph{Diassociative algebras} (or \emph{associative dialgebras}), which satisfy the identity of duplicial
algebras \eqref{dup} together with
  \[
  a \ver ( b \hor c ) \equiv a \ver ( b \ver c ), \qquad
  ( a \hor b ) \hor c \equiv ( a \ver b ) \hor c.
  \]
The nonsymmetric operad dual to the operad for diassociative algebras corresponds to dendriform algebras 
which do not have associative products.
\item \label{inter}
\emph{Associative interchange algebras} (see Definition \ref{interchangedefinition} below) which are the 
main topic of this paper. 
The corresponding symmetric operad is generated by two binary operations which satisfy two quadratic 
relations (associativity) and one cubic relation (the interchange identity).
\end{enumerate}
Diassociative algebras, which arise as universal enveloping algebras of Leibniz algebras, have been studied 
by many authors; see Loday \cite{L}.
Otherwise, little work has been done on these structures: 
for 2-compatible algebras, see Dotsenko et al.~\cite{D,OS,St,ZBG1}; 
for duplicial algebras, see Bokut et al.~\cite{BCH,Le}.

\subsection*{Interchange identity}

The interchange identity has its origin in category theory and algebraic topology, 
related to the characterization of natural transformations; see Mac Lane \cite[\S XII.3]{ML}.
It plays a crucial role in the proof that the higher homotopy groups of a topological space 
are abelian, through the Eckmann-Hilton argument \cite{EH}.

\begin{definition} \label{interchangedefinition}
Loday and Vallette, \cite[\S 13.10.4]{LV}.
Let $\hor, \ver$ be binary operations on a set. 
The following multilinear polynomial identity is called the \textbf{interchange identity}:
  \begin{equation}
  \label{interchange}
  \tag{$\boxplus$}
  ( a \hor b ) \ver ( c \hor d ) \equiv ( a \ver c ) \hor ( b \ver d ).
  \end{equation}
An \textbf{interchange algebra} is a vector space with bilinear operations $\hor, \ver$ satisfying 
the interchange identity.
An \textbf{associative interchange algebra} is an interchange algebra in which the operations
are associative. 
\end{definition}

If we regard $\hor$ and $\ver$ as horizontal and vertical compositions respectively,
then \eqref{interchange} expresses the equivalence of two decompositions of a $2 \times 2$ array:
  \[
  ( a \hor b ) \ver ( c \hor d )
  \equiv \!\!
  \begin{array}{c}
  \begin{tikzpicture}[draw=black, x=\Size, y=\Size]
    \node [Square] at ($(0,0)$) {$a$};
    \node [Square] at ($(1,0)$) {$b$};
    \node [Square] at ($(0,-1.2)$) {$c$};
    \node [Square] at ($(1,-1.2)$) {$d$};
  \end{tikzpicture}
  \end{array}
  \!\! \equiv \!\!
  \begin{array}{c}
  \begin{tikzpicture}[draw=black, x=\Size, y=\Size]
    \node [Square] at ($(0,0)$) {$a$};
    \node [Square] at ($(1,0)$) {$b$};
    \node [Square] at ($(0,-1)$) {$c$};
    \node [Square] at ($(1,-1)$) {$d$};
  \end{tikzpicture}
  \end{array}
  \!\! \equiv \!\!
  \begin{array}{c}
  \begin{tikzpicture}[draw=black, x=\Size, y=\Size]
    \node [Square] at ($(0,0)$) {$a$};
    \node [Square] at ($(1.2,0)$) {$b$};
    \node [Square] at ($(0,-1)$) {$c$};
    \node [Square] at ($(1.2,-1)$) {$d$};
  \end{tikzpicture}
  \end{array}
  \!\! \equiv
  ( a \ver c ) \hor ( b \ver d ).
  \]
Kock \cite{K} found that associativity of the products in combination with the interchange identity 
produces unexpected commutativity phenomena in higher degrees: in degree 16, he gave an example of 
two equal monomials that have the same placement of parentheses and choice of operations but different 
permutations of the variables.
The present authors \cite{BM} recently used computer algebra to prove that 9 is the lowest degree in which such 
commutativity phenomena appear.
For related work, see DeWolf et al.~\cite{DW,E,PP}.

\subsection*{Polarization and depolarization}

A bilinear operation $a \ast b$ without symmetry (that is, neither commutative nor anti-commutative) can be 
polarized: decomposed into the sum of commutative and anti-commutative products $\{a,b\}$ and $[-,-]$:
  \begin{equation}
  \{ a, b \} = a \ast b + b \ast a,
  \qquad\qquad
  [ a, b ] = a \ast b - b \ast a.
  \end{equation}
The process can be reversed: depolarization combines commutative and anti-com\-mutative products to produce 
a new operation without symmetry:
  \begin{equation}
  a \ast b = \tfrac{1}{2} \big( \; \{ a, b \} + [a,b] \; \big).
  \end{equation}
This change of perspective allows us to study products without symmetry in terms of products with symmetry.
In particular, an associative operation can be decomposed into the corresponding Lie bracket and Jordan product. 
Associativity of the original operation is equivalent to the following identities relating the polarized 
products:
  \[
  [ a, \{ b, c \} ] \equiv \{ [ a, b ], c \} + \{ b, [ a, c ] \},
  \qquad\qquad
  [ b, [ a, c ] ] \equiv \{ \{ a, b \}, c \} - \{ a, \{ b, c \}\}.
  \]
These identities state that the Lie bracket is a derivation of the Jordan product, and that the (permuted) 
Lie triple product is the Jordan associator.
For further examples of polarization and depolarization, see Markl and Remm \cite{MR}.

Two associative products produce two Lie brackets and two Jordan products.
Hence we can study algebras with two associative products by finding the polynomial identities 
relating these Lie and Jordan structures. 
To go beyond the classical theory of Lie and Jordan algebras, we consider two Lie brackets, 
or two Jordan products, or one of each.

\subsection*{Outline of the paper}

We study multilinear polynomial identities relating the Lie and Jordan products obtained from 
polarization of the associative products in the free interchange algebra.
Our main computational tools are the Maple packages for rational and modular linear algebra:
LinearAlgebra and LinearAlgebra[Modular].
Our main theoretical tool is the representation theory of the symmetric group.

Section \ref{sectionpreliminaries} recalls basic definitions and constructions related to free 
associative and nonassociative algebras, introduces the notion of the expansion map which allows 
us to express polynomial identities as the kernel of a linear transformation, and summarizes 
what we require from the representation theory of the symmetric group.
In particular, we call a homogeneous polynomial identity of degree $n$ irreducible if its 
complete linearization generates a simple $S_n$-module.

Section \ref{sectionliejordan} considers one Lie bracket and one Jordan product.
There are no new identities in degrees $\le 6$: all such identities follow from anticommutativity 
and the Jacobi identity for the Lie bracket, and commutativity and the Jordan identity for the 
Jordan product.
There are 20 new irreducible identities in degree 7; we present explicitly the smallest 14 of
these identities in a compact nonlinear form.

Section \ref{sectionlielie} considers two Lie brackets.
There are no new identities in degrees $\le 5$, but there are two new irreducible identities in 
degree 6, and another two new irreducible identities in degree 7.
We present all these identities explicitly.

Sections \ref{sectionjordanjordan} considers two Jordan products.
There is a new irreducible identity in degree 4 which is a Jordan analogue of the interchange 
identity, two new irreducible identities in degree 5, and 14 new irreducible identities in degree 6.
We present all these identities explicitly in a compact nonlinear form.
There are 94 new irreducible identities in degree 7; we present explicitly the smallest 7 of these.

Section \ref{sectionJJ7proof} contains further details of computational methods used to obtain
the results on polynomial identities in degree 7 for two Jordan products.

\begin{remark}
The results are quite different in the three cases.
For two Lie brackets, the identities are simple enough to fit easily into the space restrictions 
of a journal article.
For one Lie bracket and one Jordan product, most but not all of the identities are simple enough 
to be publishable.
For two Jordan products, only a few of the identities in degree 7 are simple enough to be 
publishable, even in a compact nonlinear form.
The identities that we discovered in the Lie-Jordan and Jordan-Jordan cases, including those which 
do not appear explicitly in this paper, are available in ancillary files to the arXiv version;
see \cite{BMarXiv} for details.
\end{remark}

\begin{problem}
The question we address in this paper can equally well be asked of any class of two-associative 
algebras.
In particular, it would be of interest to determine the polynomial identities of degree $\le 7$ 
satisfied by the Lie brackets and Jordan products in the free algebras in the above-mentioned 
classes (1)--(8).
\end{problem}

\subsection*{Base field}

Unless otherwise noted, the base field $\mathbb{F}$ has characteristic 0.
In particular, this implies that the group algebra $\mathbb{F} S_n$ of the symmetric group $S_n$ 
(the left regular module) is semisimple for all $n \ge 1$, and that every homogeneous polynomial
identity of degree $n$ is equivalent to a finite set of multilinear identities \cite[Ch.~1]{ZSSS}.

%%%%%%%%%%%%%%%%%%%%%%%%%%%%%%%%%%%%%%%%%%%%%%%%%%%%%%%%%%%%%%%%%%%%%%%%%%%%%%%%%%%%%%%%%%%%%%%%%%%%%%%%%%%%
%%%%%%%%%%%%%%%%%%%%%%%%%%%%%%%%%%%%%%%%%%%%%%%%%%%%%%%%%%%%%%%%%%%%%%%%%%%%%%%%%%%%%%%%%%%%%%%%%%%%%%%%%%%%
%%%%%%%%%%%%%%%%%%%%%%%%%%%%%%%%%%%%%%%%%%%%%%%%%%%%%%%%%%%%%%%%%%%%%%%%%%%%%%%%%%%%%%%%%%%%%%%%%%%%%%%%%%%%

\section{Preliminaries} \label{sectionpreliminaries}

\subsection*{Free $\Omega$-algebras}

Let $X = \{ x_1, x_2, \dots, x_n, \dots \}$ be a countable set of indeterminates, and let 
$\Omega = \{ \omega_1, \omega_2, \dots, \omega_n, \dots \}$ be a countable set of operation symbols
together with an arity function $\alpha\colon \Omega \to \mathbb{N} = \{ 1, 2, \dots, n, \dots \}$ 
indicating that $\omega_i$ represents an operation with $\alpha_i = \alpha(\omega_i)$ arguments.
We write $\Omega(X)$ for the set of \emph{monomials generated by $X$ using the operation $\Omega$}; 
thus $\Omega(X)$ is defined inductively by the conditions:
  \begin{enumerate}
  \item
  $X \subset \Omega(X)$.
  \item
  If $\omega_i \in \Omega$ and $m_1, \dots, m_{\alpha_i} \in \Omega(X)$ then 
  $\omega_i( m_1, \dots, m_{\alpha_i} ) \in \Omega(X)$.
  \end{enumerate}
Clearly $\Omega(X)$ is closed under the operations in $\Omega$.

We write $\mathbb{F}\{\Omega,X\}$ for the vector space over $\mathbb{F}$ with basis $\Omega(X)$;
if the operations are defined on basis monomials as in $\Omega(X)$ and extended bilinearly then 
this is the \emph{free $\Omega$-algebra} generated by $X$. 
By an \emph{$\Omega$-algebra over $\mathbb{F}$} we mean a vector space $A$ endowed with multilinear 
operations $\omega_i\colon A^{\alpha_i} \to A$ for each $\omega_i \in \Omega$ of arity 
$\alpha_i \in \mathbb{N}$; using $\omega_i$ for both the operation symbol and the multilinear 
operation should not cause confusion.
If $A$ is an $\Omega$-algebra over $\mathbb{F}$ then an element $f \in \mathbb{F}\{\Omega,X\}$ is 
called a \emph{polynomial identity} satisfied by $A$ if any substitution of elements of $A$ for 
the indeterminates in $f$ produces 0 when $f$ is evaluated by substituting the multilinear operations 
in $A$ for the corresponding operation symbols in $f$.
We write $f( x_1, \dots, x_n ) \equiv 0$ to indicate that the equation holds for all substitutions 
$x_1 = a_1$, \dots, $x_n = a_n$ of values $a_1, \dots, a_n \in A$.

Each term of a polynomial $f \in \mathbb{F}\{\Omega,X\}$ consists of a coefficient and a monomial, 
and each monomial consists of an \emph{association type}, which is a valid placement of operation 
symbols, together with an \emph{underlying sequence} of indeterminates.
Since $\mathbb{F}$ has characteristic 0, every polynomial identity
is equivalent to a finite set of (homogeneous) \emph{multilinear identities}.
A multilinear identity has a degree $n$ such that each monomial contains each of the $n$ indeterminates 
exactly once: the underlying sequence is a permutation of $x_1, \dots, x_n$.
We collect the terms of $f$ by their association types: if $f$ has $t$ different types, then 
$f = f^{(1)} + \cdots + f^{(t)}$ where $f^{(i)}$ includes the terms with association type $i$.
Each $f^{(i)}$ may be identified with an element of the group algebra $\mathbb{F} S_n$ since the 
monomials in $f^{(i)}$ differ only by a permutation of $x_1, \dots, x_n$.

We thus restrict our attention to multilinear identities, except when nonlinear (but still homogeneous)
identities allow us to write multilinear identities more compactly.
For a fixed degree $n$, we assume $X = \{ x_1, \dots, x_n \}$; then the symmetric group $S_n$ acts 
on multilinear polynomials by ignoring the association types and permuting the subscripts (not the 
positions) of the indeterminates in each monomial:
  \begin{equation}
  (\sg \cdot f)(x_1, \dots, x_n) = f(x_{\sg(1)}, \dots, x_{\sg(n)}).
  \end{equation}
This allows us to apply the representation theory of the symmetric group to the study of polynomial 
identities; for a detailed survey of this topic, see \cite{BMP}.

\subsection*{Free 2-associative algebras}

We will be concerned throughout with two bilinear associative products, $\Omega = \{ \hor, \ver \}$.
The association types for these products can be identified with the elements of $\Omega(X)$ where 
$X = \{ x \}$ has one element which we use as a placeholder for an argument to the operations.
The elements of $\Omega(X)$ are called \emph{AA types}; the number of AA types with $n$ operations
(corresponding to monomials of degree $n{+}1$) is the large Schr\"oder number, which is known to 
have this formula: 
  \begin{equation}
  aa(n) = \frac1n \sum_{k=1}^n 2^k \binom{n}{k} \binom{n}{k{-}1}.
  \end{equation}
See sequence A006318 in the OEIS: 1, 2, 6, 22, 90, 394, 1806, 8558, 41586, 206098, \dots.

Our normal form for AA types is obtained by inserting parentheses around every operation and
re-associating to the right where possible: $(- \hor -) \hor -$ becomes $- \hor (- \hor -)$, and 
similarly for $\ver$.
We need this convention to guarantee unique factorization in the presence of associativity.
Since operation $\hor$ is associative (and similarly for $\ver$), 
we clearly have the non-unique factorization $( a \hor b ) \hor c = a \hor ( b \hor c )$.
However, our convention excludes $( a \hor b ) \hor c$ and rewrites this monomial as $a \hor ( b \hor c )$.

Every normal AA form $t$ in degree $n$ has the unique factorization $t = t_1 \ast t_2$ into the 
product of normal forms $t_1, t_2$ of degrees $< n$ where $\ast \in \{ \hor, \ver \}$.
We assume that $\hor \prec \ver$, and define the \emph{deglex} total order on AA types as follows: 
We say $t \prec t'$ if and only if either 
  \begin{enumerate}
  \item
  $\deg(t) < \deg(t')$, or
  \item 
  $\deg(t) = \deg(t')$ and either 
    \begin{itemize}
    \item[]
    (a) $t_1 \prec t'_1$ or 
    (b) $t_1 = t'_1$ and $\ast \prec \ast'$ or 
    (c) $t_1 = t'_1$ and $\ast = \ast'$ and $t_2 \prec t'_2$.
    \end{itemize}
  \end{enumerate}
Since the associative operations $\hor, \ver$ have no symmetry, a multilinear AA monomial in degree $n$ 
is uniquely determined by an AA type and a permutation of the indeterminates $x_1, \dots, x_n$.
We order these multilinear AA monomials first by AA type then by lex order of the underlying permutation 
of the indeterminates.
Hence $\dim \mathrm{AA}(n) = aa(n{-}1) \; n!$ where $\mathrm{AA}(n)$ is the $S_n$-module in the 
AA operad: the multilinear subspace in degree $n$ of the free AA algebra on $n$ generators.

A multilinear polynomial identity $f( x_1, \dots, x_n ) \equiv 0$ of degree $n$ has $n{+}2$ consequences 
in degree $n{+}1$ for each operation $\ast \in \{ \hor, \ver \}$, obtained by substituting the
operation into the $n$ arguments of $f$ and substituting $f$ into the two arguments of the operation: 
  \begin{equation}
  \label{consequences}
  \left\{
  \begin{array}{l}
  f( x_1 \ast x_{n+1}, \dots, x_n ),
  \;\; \dots, \;\;
  f( x_1, \dots, x_i \ast x_{n+1}, \dots, x_n ),
  \;\; \dots, \;\;
  f( x_1, \dots, x_n \ast x_{n+1} ),
  \\
  f( x_1, \dots, x_i, \dots, x_n ) \ast x_{n+1},
  \quad
  x_{n+1} \ast f( x_1, \dots, x_i, \dots, x_n ).
  \end{array}
  \right.
  \end{equation}
The monomials in these consequences may not be in AA normal form, and hence may require rewriting.
Iteration produces consequences of $f$ in all higher degrees.

Every consequence of the interchange identity \eqref{interchange} in all degrees can be 
written as a difference of multilinear monomials $\iota(t) - \pi(t')$ where $\iota(t)$ has type $t$ 
and the identity permutation of the indeterminates, and $\pi(t')$ has type $t'$ and permutation $\pi$.
This allows us to reduce significantly the number of consequences in higher degrees.
We write $\mathrm{I\,I}(n)$ for the $S_n$-submodule of $\mathrm{AA}(n)$ generated by the consequences 
of \eqref{interchange}.
The quotient module $\mathrm{AA}(n) / \mathrm{I\,I}(n)$ is the $S_n$-module in the associative 
interchange operad.

\subsection*{Symmetric and skew-symmetric products}

We use the terms \emph{LJ types} (for one commutative and one anti-commutative operation), 
\emph{LL types} (for two anti-commutative operations), \emph{JJ types} (for two commutative operations).
In every degree $n$ the number $l\!j(n)$ of LJ/LL/JJ types is the same, and is sequence A226909
in the OEIS: 1, 2, 4, 14, 44, 164, 616, 2450, 9908, 41116, 173144, \dots.
In a general discussion we use LJ types.

We identify the LJ types with the elements of the subset $\mathrm{LJ} \subset \Omega(X)$ defined as 
follows where $X = \{ x \}$ and $\Omega = \{ \ast_1, \ast_2 \}$ contains two (anti-)commutative operations: 
  \begin{enumerate}
  \item
  $x \in \mathrm{LJ}$, and 
  \item
  if $v, w \in \mathrm{LJ}$ with $v \preceq w$ then $v \ast_1 w \in \mathrm{LJ}$ and $v \ast_2 w \in \mathrm{LJ}$
  where we use the deglex total order analogous to that on AA types with $\ast_1 \prec \ast_2$.
  \end{enumerate}
We write $\mathrm{LJ}(n)$ for the $S_n$-module in the LJ operad.
We have $\dim \mathrm{LJ}(n) < l\!j(n) \; n!$ for $n \ge 2$ since the (skew-)symmetries of the operations
imply that two monomials with the same LJ type but different permutations of the indeterminates can be equal 
up to a sign.

For any $\Omega$-algebra, the vector space consisting of all multilinear polynomial identities of 
degree $n$ satisfied by a given algebra (or given operations) is an $S_n$-module, which we call 
the module of \emph{all} identities.
Some of these multilinear identities in degree $n$ are consequences of those of lower degree, and hence 
do not provide any information; we call this the submodule of \emph{old} identities.
In each degree $n$, we are interested only in the identities which cannot be expressed in terms of 
known identities of lower degree.
These \emph{new} identities are elements of the quotient module of all by old identities.

For an algebra with two (skew-)symmetric products, a multilinear identity in degree $n$ generates 
only $n{+}1$ consequences in degree $n{+}1$ for each product, since the last two consequences in 
\eqref{consequences} are equal up to a sign.
In what follows, this process begins with the Jacobi and Jordan identities satisfied by the Lie
and Jordan products.

\subsection*{Expansion map}

The \emph{expansion map} $E_n\colon \mathrm{LJ}(n) \to \mathrm{AA}(n)$ is defined on basis monomials 
by replacing each of the (skew-)symmetric product symbols in the LJ monomials by the Lie bracket 
or Jordan product for the corresponding associative operation.
We write $\widetilde E_n= \pi \circ E_n$ for the composition of the expansion map with the natural 
surjection $\pi$:
  \begin{equation}
  \label{expansionmap}
  \widetilde E_n\colon \mathrm{LJ}(n) 
  \xrightarrow{\quad E_n \quad} \mathrm{AA}(n) 
  \xrightarrow{\quad \pi \quad} \mathrm{AA}(n) / \mathrm{I\,I}(n).
  \end{equation}
The multilinear identities in degree $n$ satisfied by the (skew-)symmetric products in the free 
associative interchange algebra are the nonzero elements of the kernel of $\widetilde E_n$.
Clearly $\widetilde E_n$ is an $S_n$-module morphism; we denote its kernel by $\mathrm{All}(n)$.

We compute a basis for $\mathrm{All}(n)$ as follows.
Given ordered monomial bases of $\mathrm{AA}(n)$ and $\mathrm{LJ}(n)$, we construct a block matrix
called the \emph{expansion matrix}:
  \begin{equation}
  \label{blockmatrix}
  B_n = 
  \begin{bmatrix} \Xi &\; O \\ X &\; I \end{bmatrix}
  \end{equation}
The entries of the expansion matrix are determined as follows:
\begin{itemize}
\item
The rows of $X$ contain the (coefficient vectors of the) expansions of the LJ monomials into the
free AA algebra so that $\mathrm{rowspace}(X) = \mathrm{image}(E_n)$.
\item
The rows of $\Xi$ contain the consequences of \eqref{interchange} so that 
$\mathrm{rowspace}(\Xi) = \mathrm{I\,I}(n)$.
\item
$O$ and $I$ are zero and identity matrices of the appropriate sizes.
\end{itemize}
We compute $\mathrm{RCF}(B_n)$ and identify the lower right block $K_n$ consisting of the nonzero rows 
whose leading 1s occur in columns $j > \dim \mathrm{AA}(n)$;
we have $\mathrm{rowspace}(K_n) = \mathrm{All}(n)$.
The consequences in degree $n$ of the known LJ identities of degree $< n$ generate a submodule 
$\mathrm{Old}(n) \subseteq \mathrm{All}(n)$; 
the new identities are $\mathrm{New}(n) = \mathrm{All}(n) / \mathrm{Old}(n)$.

The entries of $B_n$ belong to the set $\{ 0, \pm 1 \}$.
We prefer to compute the RCF over $\mathbb{Q}$, but this can be difficult if the matrix is large.
We often work over a finite field $\mathbb{F}_p$ where $p > n$; this guarantees that every finite-dimensional 
$S_n$-module is completely reducible and hence the structure constants $\{ 0, \pm 1 \}$ of the group algebra 
are formally the same as in characteristic 0 if we use symmetric representatives modulo $p$.

If the matrix is small enough, we can compute over $\mathbb{Z}$. 
We find the Hermite normal form (HNF) instead of the RCF and identify the lower right block as before.
We then apply the LLL algorithm for lattice basis reduction to compute a short integer basis for 
$\mathrm{All}(n)$, where `short' refers to the Euclidean lengths of the rows; see \cite{BP1} for details.

\subsection*{Representation theory}

As the degree of the identities increases, the numbers of AA and LJ monomials grow exponentially, 
and hence so does the matrix $B_n$ of equation \eqref{blockmatrix}.
The representation theory of $S_n$ allows us to replace $B_n$ with significantly smaller matrices.

The application of representation theory to the study of polynomial identities was initiated independently
by Malcev \cite{M} and Specht \cite{S} in 1950.
The computational implementation of these methods was pioneered by Hentzel \cite{H1,H2,H3} in the 1970s.
Important contributions were made by Clifton \cite{C} and Bondari \cite{B}.
For summaries of the theory with applications to polynomial identities, see \cite{BP3,BP4}.

We recall the decomposition of the group algebra $\mathbb{F}S_n$ into a direct sum of simple two-sided ideals
isomorphic to full matrix algebras.
The sum is over all partitions $\lambda$ of $n$, and $d(\lambda)$ is the number of standard tableaux for
the Young diagram of $\lambda$:
  \begin{equation}
  \label{Sndecomp}
  R \colon \mathbb{F}S_n \longrightarrow \bigoplus_\lambda M_{d(\lambda)}(\mathbb{F}).
  \end{equation}
Using representation theory we consider the partitions $\lambda$ one at a time.
To compute the projections $R_\lambda$ we used our own Maple implementation of Clifton's algorithm \cite{C}.
Theoretical and algorithmic details on computing the projections 
$R_\lambda\colon \mathbb{F}S_n \twoheadrightarrow M_{d(\lambda)}(\mathbb{F})$ 
and the inverse inclusions 
$M_{d(\lambda)}(\mathbb{F}) \hookrightarrow \mathbb{F}S_n$
are given in \cite{BMP}.

We write a multilinear polynomial $f \in \mathrm{LJ}(n)$ as the sum of $l\!j(n)$ components corresponding 
to the LJ types; in each component, the terms differ only by the permutation of the indeterminates.
Hence $f$ is an element of the direct sum of $l\!j(n)$ copies of $\mathbb{F} S_n$.
We fix a partition $\lambda$; to each component of $f$ we apply the surjective map $R_\lambda$ whose image 
is the corresponding matrix algebra in \eqref{Sndecomp}.
We combine horizontally the $l\!j(n)$ matrices of size $d(\lambda) \times d(\lambda)$ to obtain a matrix 
$R_\lambda(f)$ of size $d(\lambda) \times l\!j(n) d(\lambda)$: this is the representation matrix of $f$ 
for partition $\lambda$.
Each row of $\mathrm{RCF}(R_\lambda(f))$ generates a submodule of $(\mathbb{F}S_n)^{l\!j(n)}$ isomorphic 
to $[\lambda]$, the irreducible $S_n$-module for partition $\lambda$.
Hence $\mathrm{rank}\;R_\lambda(f)$ is the multiplicity of $[\lambda]$ in the submodule of 
$(\mathbb{F}S_n)^{l\!j(n)}$ generated by $f$.
In this way we compute generating sets for
$\mathrm{All}_\lambda(n)$, $\mathrm{Old}_\lambda(n)$, $\mathrm{New}_\lambda(n)$:
these are the isotypic components for $\lambda$ of the $S_n$-modules
$\mathrm{All}(n)$, $\mathrm{Old}(n)$, $\mathrm{New}(n)$.

Once we have identified the partitions which have new identities ($\mathrm{New}_\lambda(n) \ne \{0\}$)
we use nonlinear monomials 
to find compact forms of the identities.
For the partition $n = n_1 + \cdots + n_k$ where $n \ge n_1 \ge \cdots \ge 1$, we take $n_i$ copies of 
the indeterminate $a_i$ for $1 \le i \le k$.
The number of permutations of the indeterminates is no longer $n!$ but the much smaller multinomial 
coefficient $\binom{n}{n_1,\dots,n_k}$.
If the partition has a tail of length $t \ge 2$, meaning $n_{k+1-i} = 1$ for $1 \le i \le t$, then 
representation theory allows us to assume that every identity $f$ 
is an alternating function of the last $t$ indeterminates.

The following sections, and in particular section \ref{sectionJJ7proof}, give further explanation of 
the application of representation theory to polynomial identities 

\vfill

%%%%%%%%%%%%%%%%%%%%%%%%%%%%%%%%%%%%%%%%%%%%%%%%%%%%%%%%%%%%%%%%%%%%%%%%%%%%%%%%%%%%%%%%%%%%%%%%%%%%%%%%%%%%
%%%%%%%%%%%%%%%%%%%%%%%%%%%%%%%%%%%%%%%%%%%%%%%%%%%%%%%%%%%%%%%%%%%%%%%%%%%%%%%%%%%%%%%%%%%%%%%%%%%%%%%%%%%%
%%%%%%%%%%%%%%%%%%%%%%%%%%%%%%%%%%%%%%%%%%%%%%%%%%%%%%%%%%%%%%%%%%%%%%%%%%%%%%%%%%%%%%%%%%%%%%%%%%%%%%%%%%%%

\section{Lie bracket and Jordan product} \label{sectionliejordan}

In this section we study the multilinear identities relating the Lie bracket of the horizontal 
operation $\hor$ and the Jordan product of the vertical operation $\ver$:
  \begin{equation} \label{liejordan}
  \liel a, b \lier = a \hor b - b \hor a,
  \qquad\qquad
  \jorl a, b \jorr = a \ver b + b \ver a.
  \end{equation}

\begin{proposition} \label{propositionLJ6}
Every multilinear identity in degree $\le 6$ satisfied by the Lie bracket and Jordan product 
\eqref{liejordan} in the free associative interchange algebra is a consequence of anti-commutativity 
and the Jacobi identity for the Lie bracket $\liel a, b \lier$ and commutativity and the (linearized) 
Jordan identity for the Jordan product $\jorl a, b \jorr$.
\end{proposition}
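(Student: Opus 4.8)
The plan is to verify Proposition \ref{propositionLJ6} by the computational framework of
Section \ref{sectionpreliminaries}, exploiting the fact that the claim is purely a
statement about the kernel of the expansion maps $\widetilde E_n$ for $n \le 6$.
Concretely, I would show that in each degree $n \in \{ 2, 3, 4, 5, 6 \}$ the module
$\mathrm{New}(n)$ of new identities is zero, so that every element of
$\mathrm{All}(n) = \ker \widetilde E_n$ already lies in $\mathrm{Old}(n)$, the submodule
generated by the consequences of the four defining identities (anti-commutativity and
Jacobi for $\liel -,- \lier$, commutativity and the linearized Jordan identity for
$\jorl -,- \jorr$). Since the base field has characteristic $0$, it suffices to work with
multilinear identities, and the whole question reduces to comparing two $S_n$-submodules
of $\mathrm{LJ}(n)$.

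First I would assemble, for each degree $n \le 6$, the expansion matrix $B_n$ of
\eqref{blockmatrix}: the block $X$ records the expansions $E_n$ of the LJ monomials into
$\mathrm{AA}(n)$ via \eqref{liejordan}, and the block $\Xi$ records the consequences of the
interchange identity \eqref{interchange}, generated as in \eqref{consequences} and rewritten
into AA normal form. Computing $\mathrm{RCF}(B_n)$ and reading off the lower-right block
$K_n$ yields a spanning set for $\mathrm{All}(n)$. Next I would generate $\mathrm{Old}(n)$:
starting from Jacobi (degree $3$) and the linearized Jordan identity (degree $4$), I would
apply the lifting process \eqref{consequences} — now with only $n{+}1$ consequences per
operation, since the products are (skew-)symmetric — to produce all consequences up through
degree $6$, again reducing each monomial modulo the (skew-)symmetries to the chosen
$\mathrm{LJ}$ normal form. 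The proposition is then equivalent to the equality
$\mathrm{Old}(n) = \mathrm{All}(n)$, i.e. $\dim \mathrm{Old}(n) = \dim \mathrm{All}(n)$ as
$S_n$-modules, for every $n \le 6$.

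Rather than compare these modules over the full group algebra, I would use the
representation-theoretic reduction described in the preliminaries: fix each partition
$\lambda \vdash n$, form the representation matrices $R_\lambda$ of the generators of
$\mathrm{All}(n)$ and of $\mathrm{Old}(n)$, and compare ranks. Since
$\mathrm{Old}_\lambda(n) \subseteq \mathrm{All}_\lambda(n)$ automatically, establishing
$\mathrm{rank}\,R_\lambda(\mathrm{Old}) = \mathrm{rank}\,R_\lambda(\mathrm{All})$ for every
$\lambda$ forces $\mathrm{New}_\lambda(n) = \{0\}$, and doing this for all $\lambda$ gives
$\mathrm{New}(n) = \{0\}$. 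The degrees $2$ and $3$ are immediate (there the only identities
are the defining ones), so the computation has genuine content only in degrees $4$, $5$,
and $6$. To make the arithmetic trustworthy I would run the rank computations over a prime
$p > n$, where $\mathbb{F}_p S_n$ is semisimple and the structure constants agree with
characteristic $0$, and confirm the critical equalities over $\mathbb{Q}$ where feasible.

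The main obstacle is not conceptual but a matter of scale and bookkeeping in degree $6$:
here $\dim \mathrm{AA}(6) = aa(5)\,6! = 394 \cdot 720$ columns enter $B_6$, and one must
correctly enumerate and normalize all consequences of \eqref{interchange} as well as all
degree-$6$ consequences of Jacobi and the Jordan identity before any ranks can be compared.
The delicate step is ensuring that the expansion block $X$ and the interchange block $\Xi$
are computed in a single consistent $\mathrm{AA}$ normal form (right-normed, with
$\hor \prec \ver$), so that the quotient $\mathrm{AA}(n)/\mathrm{I\,I}(n)$ is represented
faithfully; the per-partition reduction via $R_\lambda$ is precisely what keeps these
calculations within reach, replacing one enormous rank computation by a family of small
ones indexed by the eleven partitions of $6$.
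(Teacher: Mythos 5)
Your proposal is correct and follows essentially the same route as the paper: build the expansion matrix \eqref{blockmatrix}, extract $\mathrm{All}_\lambda(n)$ from the lower-right block of its RCF, generate $\mathrm{Old}_\lambda(n)$ from the consequences of anti-commutativity, Jacobi, commutativity and the linearized Jordan identity, and verify $\mathrm{Old}_\lambda(n) = \mathrm{All}_\lambda(n)$ partition by partition. The only (harmless) difference is that the paper invokes the standard lifting principle to reduce the whole verification to degree $6$ alone, whereas you check each of degrees $4$, $5$, $6$ separately.
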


\begin{proof}
It suffices to prove that there are no new identities in degree 6, since existence of new identities 
in degree $n$ implies existence in degree $n{+}1$.
In degree 6, there are 394 AA types and 98 normalized consequences of the interchange identity;
there are 164 LJ types and 810 consequences of the Jacobi and Jordan identities.
For every partition $\lambda$ of 6 we construct the expansion matrix \eqref{blockmatrix},
compute its RCF, and extract the lower right block whose row space is the isotypic component for 
representation $[\lambda]$ of the $S_6$-module of multilinear identities relating the LJ monomials.
We also construct the matrix representing the consequences of the Jacobi and Jordan identities 
for partition $\lambda$, and compute its RCF.
We find that for every partition $\lambda$ of 6, the two RCFs are equal.
\end{proof}

\begin{lemma} \label{lemmaLJ7}
For every partition $\lambda$ of 7, the multiplicity of the simple $S_7$-module $[\lambda]$ in 
the modules $\mathrm{All}(7)$, $\mathrm{Old}(7)$, $\mathrm{New}(7)$ appears in Figure \ref{LJ7mults}.
Summing the multiplicities gives 20 new irreducible multilinear identities in degree 7,
relating the Lie bracket and Jordan product \eqref{liejordan} in the free associative interchange algebra, 
which are not consequences of the identities of lower degree (Proposition \ref{propositionLJ6}).
\end{lemma}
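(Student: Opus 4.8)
The plan is to run, for degree $n = 7$, the representation-theoretic computation of Section \ref{sectionpreliminaries} through all fifteen partitions $\lambda \vdash 7$ one at a time. Because Proposition \ref{propositionLJ6} already rules out new identities in degree $\le 6$, the submodule $\mathrm{Old}(7)$ is generated precisely by the degree-$7$ consequences of the Jacobi identity (degree $3$) and the linearized Jordan identity (degree $4$) --- anti-commutativity and commutativity being encoded in $\mathrm{LJ}(7)$ itself. These are obtained by iterating the substitution and multiplication operations of \eqref{consequences} up through degrees $5, 6, 7$, in the form adapted to (skew-)symmetric products where each degree-$k$ identity contributes $k{+}1$ consequences per operation.

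First I would fix $\lambda$, compute $d(\lambda)$, and build the $\lambda$-projected analogue of the block matrix \eqref{blockmatrix}: one stack of rows carrying $R_\lambda$ of the interchange consequences \eqref{interchange} in the compact $\iota(t) - \pi(t')$ form, so that its row space is $\mathrm{I\,I}_\lambda(7)$ in $\mathrm{AA}$-coordinates, and a second stack carrying $R_\lambda$ of the expansions $E_7$ of the $l\!j(7) = 616$ LJ types, each augmented by a $d(\lambda) \times d(\lambda)$ identity block recording its LJ source. Row-reducing and retaining the rows whose leading ones lie on the LJ side yields a generating set for the isotypic component $\mathrm{All}_\lambda(7)$, and the number of such rows is the multiplicity of $[\lambda]$ in $\mathrm{All}(7)$.

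Next I would form, in the same partition, the representation matrix $R_\lambda$ of the complete list of degree-$7$ Old consequences, row-reduce it, and read its rank as the multiplicity of $[\lambda]$ in $\mathrm{Old}(7)$; since $\mathrm{Old}_\lambda(7) \subseteq \mathrm{All}_\lambda(7)$, the multiplicity of $[\lambda]$ in $\mathrm{New}(7)$ is the difference of the two ranks. Recording these three numbers across all fifteen partitions fills Figure \ref{LJ7mults}. Finally, because each copy of a simple module is generated by exactly one irreducible identity in the sense of Section \ref{sectionpreliminaries}, summing the $\mathrm{New}$ multiplicities over $\lambda$ produces the asserted total of $20$.

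The main obstacle is scale and reliability rather than any conceptual step. Direct computation in $\mathrm{AA}(7)$ is infeasible --- there are $aa(6) = 1806$ AA types, so $\dim \mathrm{AA}(7) = 1806 \cdot 7!$ exceeds nine million --- which is exactly why the projection onto each $M_{d(\lambda)}(\mathbb{F})$ is indispensable, though even then the matrices carry up to $l\!j(7)\,d(\lambda)$ columns. To keep entries bounded I would work over a prime field $\mathbb{F}_p$ with $p > 7$, which preserves the $\{0,\pm 1\}$ group-algebra structure constants, and wherever the matrices stay small enough I would repeat the rank computations over $\mathbb{Q}$ to guard against a modular rank drop. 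The most delicate bookkeeping is the exhaustive, non-redundant generation of the interchange consequences and of all degree-$7$ Old consequences, since any omission on either side would perturb the multiplicities; checking that $\mathrm{Old}_\lambda(7) \subseteq \mathrm{All}_\lambda(7)$ holds for every $\lambda$ is the key internal consistency test.
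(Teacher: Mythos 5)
Your proposal is correct and follows essentially the same route as the paper: for each partition $\lambda \vdash 7$ you build the $\lambda$-projected expansion matrix \eqref{blockmatrix}, read off the rank of the lower-right block as the multiplicity of $[\lambda]$ in $\mathrm{All}(7)$, compare with the rank of the $R_\lambda$-image of the degree-$7$ consequences of the Jacobi and Jordan identities to get $\mathrm{Old}(7)$, and take the difference for $\mathrm{New}(7)$, summing over $\lambda$ to obtain $20$. This is exactly the method of Proposition \ref{propositionLJ6} scaled up to degree $7$, which is all the paper's own proof invokes.
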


\begin{proof}
The methods are the same as in the proof of Proposition \ref{propositionLJ6} although the matrices are 
much larger.
The module $\mathrm{Old}(7)$ is always a submodule of $\mathrm{All}(7)$, 
but for some partitions $\lambda$ it is a proper submodule, and this indicates the existence of new identities.
Since we have the isomorphism of $S_7$-modules $\mathrm{New}(7) \cong \mathrm{All}(7) / \mathrm{Old}(7)$, 
the multiplicity of $[\lambda]$ in $\mathrm{New}(7)$ is the difference between its multiplicities in  
$\mathrm{All}(7)$ and $\mathrm{Old}(7)$.
\end{proof}

\begin{figure}[ht]
\small
\[
\boxed{
\begin{array}{c}
\begin{array}{c|ccccccc}
\lambda &\;\quad 
7 &\;\quad  61 &\;\quad  52 &\;\quad  51^2 &\;\quad  43 &\;\quad  421 &\;\quad  41^3 
\\
\mathrm{all} &\;\quad
536 &\;\quad 3234 &\;\quad 7525 &\;\quad 8140 &\;\quad 7521 &\;\quad 18957 &\;\quad 10953 
\\
\mathrm{old} &\;\quad
536 &\;\quad 3231 &\;\quad 7523 &\;\quad 8136 &\;\quad 7518 &\;\quad 18956 &\;\quad 10952 
\\
\mathrm{new} &\;\quad
0 &\;\quad 3 &\;\quad 2 &\;\quad 4 &\;\quad 3 &\;\quad 1 &\;\quad 1 
\end{array}
\\
\midrule
\begin{array}{cccccccc}
3^21 &\;\quad  32^2 &\;\quad  321^2 &\;\quad  31^4 &\;\quad  2^31 &\;\quad  2^21^3 &\;\quad  21^5 &\;\quad  1^7
\\
11378 &\;\quad 11399 &\;\quad 19160 &\;\quad 8314 &\;\quad 7677 &\;\quad 7766 &\;\quad 3375 &\;\quad 574
\\
11375 &\;\quad 11399 &\;\quad 19159 &\;\quad 8314 &\;\quad 7677 &\;\quad 7765 &\;\quad 3375 &\;\quad 573
\\
3 &\;\quad 0 &\;\quad 1 &\;\quad 0 &\;\quad 0 &\;\quad 1 &\;\quad 0 &\;\quad 1
\end{array}
\end{array}
}
\]
\caption{Multiplicities of new Lie-Jordan identities in degree 7}
\label{LJ7mults}
\end{figure}

\begin{theorem} \label{LJ7theorem}
Every multilinear identity in degree 7 satisfied by the Lie bracket and Jordan product \eqref{liejordan} 
in the free associative interchange algebra is a consequence of:
\begin{enumerate}
\item[(i)]
anti-commutativity and the Jacobi identity for the Lie bracket $\liel a, b \lier$;
\item[(ii)]
commutativity and the (linearized) Jordan identity for the Jordan product $\jorl a, b \jorr$;
\item[(iii)]
the linearizations of the identities of Figure \ref{liejordeg7identities} which are nonlinear 
forms of 14 of the 20 new irreducible identities identified by Lemma \ref{lemmaLJ7};
\item[(iv)]
the multilinear alternating sum identity of equation \eqref{LJdeg7lastrep};
\item[(v)]
the linearizations of the remaining 5 nonlinear identities which are available in an ancillary file 
for the \emph{\texttt{arXiv}} version of this paper; see \cite{BMarXiv} for details.
\end{enumerate}
\end{theorem}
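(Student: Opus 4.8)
The plan is to combine the multiplicity data of Lemma \ref{lemmaLJ7} with an explicit verification that the identities in (iii)--(v), together with the lower-degree identities in (i)--(ii), generate the whole module $\mathrm{All}(7) = \ker \widetilde E_7$. By construction $\mathrm{All}(7)$ is the $S_7$-module of all multilinear degree-$7$ identities satisfied by the operations \eqref{liejordan}, and the submodule $\mathrm{Old}(7)$ of consequences of anti-commutativity, the Jacobi identity, commutativity, and the linearized Jordan identity is exactly what (i)--(ii) contribute. Since $\mathrm{New}(7) \cong \mathrm{All}(7)/\mathrm{Old}(7)$, it suffices to show that the explicit identities of (iii)--(v) span $\mathrm{New}(7)$ modulo $\mathrm{Old}(7)$.

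First I would establish correctness: each of the $14$ nonlinear identities of Figure \ref{liejordeg7identities}, the multilinear alternating identity \eqref{LJdeg7lastrep}, and the $5$ ancillary nonlinear identities must genuinely lie in $\mathrm{All}(7)$. This is a direct computation for each identity: apply the expansion map $E_7$, rewrite the resulting element of $\mathrm{AA}(7)$ into normal form, and check that it lies in the submodule $\mathrm{I\,I}(7)$ of interchange consequences, so that its image under $\widetilde E_7$ in the quotient $\mathrm{AA}(7)/\mathrm{I\,I}(7)$ vanishes.

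Then I would verify completeness using representation theory. I would fully linearize each explicit identity and, for every partition $\lambda$ of $7$, form its representation matrix $R_\lambda$. Stacking these matrices above the representation matrix of a generating set for $\mathrm{Old}_\lambda(7)$ and computing the RCF, I would check that the combined rank equals the multiplicity in the $\mathrm{all}$ row of Figure \ref{LJ7mults}, while the rank of $\mathrm{Old}_\lambda(7)$ alone equals the $\mathrm{old}$ row; the difference then realizes the $\mathrm{new}$ row. Summing the new multiplicities over all $\lambda$ gives $20$, matching Lemma \ref{lemmaLJ7}, which forces the explicit identities to generate all of $\mathrm{New}(7)$ and hence, together with $\mathrm{Old}(7)$, all of $\mathrm{All}(7)$.

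The main obstacle is the passage between the compact nonlinear forms and the full multilinear module. A nonlinear identity uses repeated indeterminates according to a partition shape, so its linearization only populates certain isotypic components; the delicate step is confirming that the chosen nonlinear forms, once linearized, jointly realize the precise multiplicities of Figure \ref{LJ7mults} in every $[\lambda]$ without over- or under-counting, so that no partition is silently omitted. The partition $1^7$ is the critical special case: its new component is totally antisymmetric and cannot be encoded with any repeated variable, which is exactly why it is presented separately as the multilinear alternating sum identity \eqref{LJdeg7lastrep} rather than in nonlinear form. Checking that this single alternating identity accounts for the one-dimensional new multiplicity at $1^7$, while the nonlinear identities of (iii) and (v) cover the remaining nineteen across the other nine flagged partitions, is the bookkeeping that completes the argument.
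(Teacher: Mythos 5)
Your proposal is correct and follows essentially the same route as the paper: both rest on the expansion map, the isotypic multiplicity counts of Lemma \ref{lemmaLJ7}, compact nonlinear forms for partitions with repeated parts, and the separate multilinear treatment of $\lambda = 1^7$. The only (minor) difference in emphasis is direction --- the paper \emph{constructs} the nonlinear identities from the rows of $\mathsf{all}(\lambda)$ whose leading 1s are not matched in $\mathsf{old}(\lambda)$, so that spanning $\mathrm{New}_\lambda(7)$ holds by construction, whereas you take the identities as given and \emph{verify} by rank computations that their linearizations realize the stated multiplicities.
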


\begin{proof}
Let $\lambda$ be one of the 10 partitions for which $\mathrm{New}_\lambda(7) \ne \{0\}$.
Let $\mathsf{all}(\lambda)$ and $\mathsf{old}(\lambda)$ be the matrices in RCF whose row spaces 
are $\mathrm{All}_\lambda(7)$ and $\mathrm{Old}_\lambda(7)$ respectively.
Let $\mathcal{A}(\lambda)$ and $\mathcal{O}(\lambda)$ be the sets of column indices corresponding 
to the leading 1s in $\mathsf{all}(\lambda)$ and $\mathsf{old}(\lambda)$.
A row of $\mathsf{all}(\lambda)$ whose leading 1 belongs to column 
$j \in \mathcal{A}(\lambda) \setminus \mathcal{O}(\lambda)$
represents a new identity.
For such a row, let $T$ be the set of LJ types $t$ for which the row has a nonzero entry in block $t$.
In many cases, $T$ is a very small subset of the LJ types.
This allows us to construct new identities without using representation theory.
For each type in $T$ we replace the positions of the indeterminates by all permutations of the multiset 
corresponding to $\lambda$ and normalize each monomial using the \mbox{(skew-)}symmetries of the LJ products.
For example, for $\lambda = 61$ we have only 7 permutations of the multiset $a^6b$ rather than $7!$
permutations of $abcdefg$.
The new identities for partition $\lambda$ are linear combinations of this relatively small set of 
nonlinear monomials.

\begin{figure}[ht]
\small
\[
\boxed{
\begin{array}{c|ccccccccc}
\lambda &\,\,\, 
61 &\,\,\, 52 &\,\,\, 51^2 &\,\,\, 43 &\,\,\, 421 &\,\,\, 
41^3 &\,\,\, 3^21 &\,\,\, 321^2 &\,\,\, 2^21^3
\\
\text{new} &\,\,\,
3 &\,\,\, 2 &\,\,\, 4 &\,\,\, 3 &\,\,\, 1 &\,\,\, 
1 &\,\,\, 3 &\,\,\, 1 &\,\,\, 1
\\
\text{terms} &\,\,\,
3{,}6{,}12 &\,\,\, 8{,}12 &\,\,\, 8{,}10{,}15{,}18 &\,\,\, 9{,}20{,}44 &\,\,\, 
20 &\,\,\, 12 &\,\,\, 15{,}24{,}40 &\,\,\, 21 &\,\,\, 24
\end{array}
}
\]
\caption{Number of terms in new Lie-Jordan identities in degree 7}
\label{LJ7terms}
\end{figure}

\medskip

We found new nonlinear identities for every $\lambda$ with $\mathrm{New}_\lambda(7) \ne \{0\}$.
Figure \ref{LJ7terms} displays the number of terms in each new identity for each $\lambda \ne 1^7$.
Figure \ref{liejordeg7identities} displays the new identities which have at most 20 terms: 
the 9 partitions in Figure \ref{LJ7terms} have respectively 3, 2, 4, 2, 1, 1, 1, 0, 0 identities
satisfying this condition.
We use power associativity of the Jordan product to write the monomials more compactly:
$\jorl a a \jorr = a^2$,
$\jorl a \jorl a a \jorr \jorr = a^3$,
$\jorl \jorl a a \jorr \jorl a a \jorr \jorr = a^4$.
Partition $\lambda = 1^7$ is a special case: the multilinear identities are linear combinations
of alternating sums over all 7 variables in the LJ types.
The new identity for that last partition is a single alternating sum:
  \begin{equation} 
  \label{LJdeg7lastrep}
  \sum_{\sg \in S_7}
  \epsilon(\sg) 
  \liel \jorl a^\sg, \liel b^\sg, c^\sg \lier \jorr, \jorl \liel d^\sg, e^\sg \lier, 
  \liel f^\sg, g^\sg \lier \jorr \lier
  \equiv 0.
  \end{equation}
Using skew-symmetry of the Lie bracket and symmetry of the Jordan product we can normalize 
the LJ monomials in this identity and collect terms with the same permutation of the variables 
to reduce the identity to a sum of only $7!/2^4 = 315$ terms.
\end{proof}

\begin{remark}
In the rest of this paper, we omit proofs when they are based on computations very similar to 
those already described.
\end{remark}

  \begin{figure}
  \small
  \[
  \boxed{
  \begin{array}{ll}
  % 
  % 2, 1 
  % 
  61 &
     \jorl \liel a b \lier \liel a^2 a^3 \lier \jorr 
  -  \jorl \liel a a^2 \lier \liel b a^3 \lier \jorr 
  +  \jorl \liel b a^2 \lier \liel a a^3 \lier \jorr 
  \\ 
  \midrule
  % 
  % 2, 2 
  % 
  &
     \jorl \liel a b \lier \liel a a^4 \lier \jorr 
  +2 \jorl \liel a a^2 \lier \liel a \jorl a \jorl a b \jorr \jorr \lier \jorr 
  -  \jorl \liel a a^2 \lier \liel a \jorl b a^2 \jorr \lier \jorr 
  -2 \jorl \liel a \jorl a b \jorr \lier \liel a a^3 \lier \jorr 
  +  \jorl \liel b a^2 \lier \liel a a^3 \lier \jorr 
  \\ 
  &
  +2 \jorl \liel a a^2 \lier \liel a^2 \jorl a b \jorr \lier \jorr 
  \\
  \midrule 
  % 
  % 2, 3 
  % 
  &
     \liel a^2 \jorl a \jorl b \liel a a^2 \lier \jorr \jorr \lier 
  -  \liel a^2 \jorl b \jorl a \liel a a^2 \lier \jorr \jorr \lier 
  +  \liel a^2 \jorl a \liel b a^3 \lier \jorr \lier 
  +  \liel a^2 \jorl b \liel a a^3 \lier \jorr \lier 
  -2 \liel \jorl a b \jorr \jorl a \liel a a^3 \lier \jorr \lier 
  \\ 
  &
  -  \liel a^2 \jorl a^2 \liel b a^2 \lier \jorr \lier 
  -  \liel a^2 \jorl \jorl a b \jorr \liel a a^2 \lier \jorr \lier 
  +2 \liel \jorl a b \jorr \jorl a^2 \liel a a^2 \lier \jorr \lier 
  -  \liel a^3 \jorl a \liel b a^2 \lier \jorr \lier 
  -  \liel a^3 \jorl b \liel a a^2 \lier \jorr \lier 
  \\ 
  &
  +2 \liel \jorl b a^2 \jorr \jorl a \liel a a^2 \lier \jorr \lier 
  -  \liel a^3 \jorl \liel a b \lier a^2 \jorr \lier 
  \\ 
  \midrule
  % 
  % 3, 1 
  % 
  52 &
     \jorl \liel a b \lier \liel b a^4 \lier \jorr 
  +2 \jorl \liel a b \lier \liel a^2 \jorl a \jorl a b \jorr \jorr \lier \jorr 
  -  \jorl \liel a b \lier \liel a^2 \jorl b a^2 \jorr \lier \jorr 
  -2 \jorl \liel a \jorl a b \jorr \lier \liel b a^3 \lier \jorr 
  \\ 
  &
  +2 \jorl \liel b a^2 \lier \liel a \jorl a \jorl a b \jorr \jorr \lier \jorr 
  -  \jorl \liel b a^2 \lier \liel a \jorl b a^2 \jorr \lier \jorr 
  +  \jorl \liel b a^2 \lier \liel b a^3 \lier \jorr 
  +2 \jorl \liel b a^2 \lier \liel a^2 \jorl a b \jorr \lier \jorr 
  \\ 
  \midrule
  % 
  % 3, 2 
  % 
  &
     \liel a^2 \jorl a \jorl b \liel b a^2 \lier \jorr \jorr \lier 
  -  \liel a^2 \jorl b \jorl a \liel b a^2 \lier \jorr \jorr \lier 
  +  \liel a^2 \jorl b \liel b a^3 \lier \jorr \lier 
  -  \liel b^2 \jorl a \liel a a^3 \lier \jorr \lier 
  -  \liel a^2 \jorl \jorl a b \jorr \liel b a^2 \lier \jorr \lier 
  \\ 
  &
  +  \liel b^2 \jorl a^2 \liel a a^2 \lier \jorr \lier 
  +  \liel a^3 \jorl a \jorl b \liel a b \lier \jorr \jorr \lier 
  -  \liel a^3 \jorl b \jorl a \liel a b \lier \jorr \jorr \lier 
  -  \liel a^3 \jorl b \liel b a^2 \lier \jorr \lier 
  -  \liel \jorl a b^2 \jorr \jorl a \liel a a^2 \lier \jorr \lier 
  \\ 
  &
  +2 \liel \jorl b \jorl a b \jorr \jorr \jorl a \liel a a^2 \lier \jorr \lier 
  -  \liel a^3 \jorl \liel a b \lier \jorl a b \jorr \jorr \lier 
  \\ 
  \midrule
  % 
  % 4, 1 
  % 
  51^2 &
     \jorl \liel a c \lier \liel b a^4 \lier \jorr 
  +2 \jorl \liel a b \lier \liel a^2 \jorl a \jorl a c \jorr \jorr \lier \jorr 
  -  \jorl \liel a b \lier \liel a^2 \jorl c a^2 \jorr \lier \jorr 
  -2 \jorl \liel a \jorl a c \jorr \lier \liel b a^3 \lier \jorr 
  \\ 
  &
  +2 \jorl \liel b a^2 \lier \liel a \jorl a \jorl a c \jorr \jorr \lier \jorr 
  -  \jorl \liel b a^2 \lier \liel a \jorl c a^2 \jorr \lier \jorr 
  +  \jorl \liel c a^2 \lier \liel b a^3 \lier \jorr 
  +2 \jorl \liel b a^2 \lier \liel a^2 \jorl a c \jorr \lier \jorr 
  \\ 
  \midrule
  % 
  % 4, 2 
  % 
  &
   2 \liel a^3 \jorl a \jorl a \liel b c \lier \jorr \jorr \lier 
  -4 \liel \jorl a \jorl a b \jorr \jorr \jorl a \jorl a \liel a c \lier \jorr \jorr \lier 
  +4 \liel \jorl a \jorl a c \jorr \jorr \jorl a \jorl a \liel a b \lier \jorr \jorr \lier 
  +2 \liel \jorl b a^2 \jorr \jorl a \jorl a \liel a c \lier \jorr \jorr \lier 
  \\ 
  &
  -2 \liel \jorl c a^2 \jorr \jorl a \jorl a \liel a b \lier \jorr \jorr \lier 
  -  \liel a^3 \jorl \liel b c \lier a^2 \jorr \lier 
  +2 \liel \jorl a \jorl a b \jorr \jorr \jorl \liel a c \lier a^2 \jorr \lier 
  -2 \liel \jorl a \jorl a c \jorr \jorr \jorl \liel a b \lier a^2 \jorr \lier 
  \\ 
  &
  -  \liel \jorl b a^2 \jorr \jorl \liel a c \lier a^2 \jorr \lier 
  +  \liel \jorl c a^2 \jorr \jorl \liel a b \lier a^2 \jorr \lier 
  \\ 
  \midrule
  % 
  % 4, 3 
  % 
  &
     \jorl \liel a c \lier \liel b a^4 \lier \jorr 
  -  \jorl \liel b c \lier \liel a a^4 \lier \jorr 
  -2 \jorl \liel a b \lier \liel a^2 \jorl c a^2 \jorr \lier \jorr 
  -2 \jorl \liel a c \lier \liel \jorl a b \jorr a^3 \lier \jorr 
  +  \jorl \liel b c \lier \liel a^2 a^3 \lier \jorr 
  \\ 
  &
  +2 \jorl \liel a a^2 \lier \liel b \jorl c a^2 \jorr \lier \jorr 
  +2 \jorl \liel a a^2 \lier \liel c \jorl a \jorl a b \jorr \jorr \lier \jorr 
  -  \jorl \liel a a^2 \lier \liel c \jorl b a^2 \jorr \lier \jorr 
  -2 \jorl \liel b a^2 \lier \liel a \jorl c a^2 \jorr \lier \jorr 
  \\ 
  &
  +  \jorl \liel c a^2 \lier \liel b a^3 \lier \jorr 
  -2 \jorl \liel c \jorl a b \jorr \lier \liel a a^3 \lier \jorr 
  -4 \jorl \liel a a^2 \lier \liel \jorl a b \jorr \jorl a c \jorr \lier \jorr 
  +4 \jorl \liel a \jorl a b \jorr \lier \liel a^2 \jorl a c \jorr \lier \jorr 
  \\ 
  &
  -4 \jorl \liel a \jorl a c \jorr \lier \liel a^2 \jorl a b \jorr \lier \jorr 
  +2 \jorl \liel c a^2 \lier \liel a^2 \jorl a b \jorr \lier \jorr 
  \\ 
  \midrule
  % 
  % 4, 4 
  % 
  &
     \liel \jorl a b \jorr \jorl a \jorl c \liel a a^2 \lier \jorr \jorr \lier 
  -  \liel \jorl a b \jorr \jorl c \jorl a \liel a a^2 \lier \jorr \jorr \lier 
  -  \liel \jorl a c \jorr \jorl a \jorl b \liel a a^2 \lier \jorr \jorr \lier 
  +  \liel \jorl a c \jorr \jorl b \jorl a \liel a a^2 \lier \jorr \jorr \lier 
  \\ 
  &
  +  \liel \jorl a b \jorr \jorl a \liel c a^3 \lier \jorr \lier 
  +  \liel \jorl a b \jorr \jorl c \liel a a^3 \lier \jorr \lier 
  -  \liel \jorl a c \jorr \jorl a \liel b a^3 \lier \jorr \lier 
  -  \liel \jorl a c \jorr \jorl b \liel a a^3 \lier \jorr \lier 
  -  \liel \jorl a b \jorr \jorl a^2 \liel c a^2 \lier \jorr \lier 
  \\ 
  &
  -  \liel \jorl a b \jorr \jorl \jorl a c \jorr \liel a a^2 \lier \jorr \lier 
  +  \liel \jorl a c \jorr \jorl a^2 \liel b a^2 \lier \jorr \lier 
  +  \liel \jorl a c \jorr \jorl \jorl a b \jorr \liel a a^2 \lier \jorr \lier 
  -  \liel \jorl b a^2 \jorr \jorl a \liel c a^2 \lier \jorr \lier 
  \\ 
  &
  -  \liel \jorl b a^2 \jorr \jorl c \liel a a^2 \lier \jorr \lier 
  +  \liel \jorl c a^2 \jorr \jorl a \liel b a^2 \lier \jorr \lier 
  +  \liel \jorl c a^2 \jorr \jorl b \liel a a^2 \lier \jorr \lier 
  -  \liel \jorl b a^2 \jorr \jorl \liel a c \lier a^2 \jorr \lier 
  +  \liel \jorl c a^2 \jorr \jorl \liel a b \lier a^2 \jorr \lier 
  \\ 
  \midrule
  % 
  % 5, 1 
  % 
  43 &
   2 \jorl \liel a b \lier \liel a \jorl a^2 b^2 \jorr \lier \jorr 
  -2 \jorl \liel a b \lier \liel a^2 \jorl a b^2 \jorr \lier \jorr 
  -  \jorl \liel a b \lier \liel b^2 a^3 \lier \jorr 
  +2 \jorl \liel a a^2 \lier \liel b \jorl a b^2 \jorr \lier \jorr 
  \\ 
  &
  -2 \jorl \liel a b^2 \lier \liel a \jorl b a^2 \jorr \lier \jorr 
  +  \jorl \liel a b^2 \lier \liel b a^3 \lier \jorr 
  -  \jorl \liel b b^2 \lier \liel a a^3 \lier \jorr 
  +2 \jorl \liel a a^2 \lier \liel \jorl a b \jorr b^2 \lier \jorr 
  +2 \jorl \liel a b^2 \lier \liel a^2 \jorl a b \jorr \lier \jorr 
  \\ 
  \midrule
  % 
  % 5, 2 
  % 
  &
   2 \liel \jorl a b \jorr \jorl a \jorl b \liel b a^2 \lier \jorr \jorr \lier 
  -2 \liel \jorl a b \jorr \jorl b \jorl a \liel b a^2 \lier \jorr \jorr \lier 
  -  \liel b^2 \jorl a \jorl b \liel a a^2 \lier \jorr \jorr \lier 
  +  \liel b^2 \jorl b \jorl a \liel a a^2 \lier \jorr \jorr \lier 
  \\ 
  &
  +2 \liel \jorl a b \jorr \jorl b \liel b a^3 \lier \jorr \lier 
  -  \liel b^2 \jorl a \liel b a^3 \lier \jorr \lier 
  -  \liel b^2 \jorl b \liel a a^3 \lier \jorr \lier 
  -2 \liel \jorl a b \jorr \jorl \jorl a b \jorr \liel b a^2 \lier \jorr \lier 
  +  \liel b^2 \jorl a^2 \liel b a^2 \lier \jorr \lier 
  \\ 
  &
  +  \liel b^2 \jorl \jorl a b \jorr \liel a a^2 \lier \jorr \lier 
  +2 \liel \jorl b a^2 \jorr \jorl a \jorl b \liel a b \lier \jorr \jorr \lier 
  -2 \liel \jorl b a^2 \jorr \jorl b \jorl a \liel a b \lier \jorr \jorr \lier 
  -  \liel \jorl a b^2 \jorr \jorl a \liel b a^2 \lier \jorr \lier 
  \\ 
  &
  -  \liel \jorl a b^2 \jorr \jorl b \liel a a^2 \lier \jorr \lier 
  -2 \liel \jorl b a^2 \jorr \jorl b \liel b a^2 \lier \jorr \lier 
  +2 \liel \jorl b \jorl a b \jorr \jorr \jorl a \liel b a^2 \lier \jorr \lier 
  +2 \liel \jorl b \jorl a b \jorr \jorr \jorl b \liel a a^2 \lier \jorr \lier 
  \\ 
  &
  -  \liel \jorl a b^2 \jorr \jorl \liel a b \lier a^2 \jorr \lier 
  -2 \liel \jorl b a^2 \jorr \jorl \liel a b \lier \jorl a b \jorr \jorr \lier 
  +2 \liel \jorl b \jorl a b \jorr \jorr \jorl \liel a b \lier a^2 \jorr \lier 
  \\ 
  \midrule
  % 
  % 5, 3: omitted 
  % 
  % 6, 1 
  % 
  421 &
   2 \liel \jorl a c \jorr \jorl a \jorl b \liel b a^2 \lier \jorr \jorr \lier 
  -2 \liel \jorl a c \jorr \jorl b \jorl a \liel b a^2 \lier \jorr \jorr \lier 
  -  \liel b^2 \jorl a \jorl c \liel a a^2 \lier \jorr \jorr \lier 
  +  \liel b^2 \jorl c \jorl a \liel a a^2 \lier \jorr \jorr \lier 
  \\ 
  &
  +2 \liel \jorl a c \jorr \jorl b \liel b a^3 \lier \jorr \lier 
  -  \liel b^2 \jorl a \liel c a^3 \lier \jorr \lier 
  -  \liel b^2 \jorl c \liel a a^3 \lier \jorr \lier 
  -2 \liel \jorl a c \jorr \jorl \jorl a b \jorr \liel b a^2 \lier \jorr \lier 
  +  \liel b^2 \jorl a^2 \liel c a^2 \lier \jorr \lier 
  \\ 
  &
  +  \liel b^2 \jorl \jorl a c \jorr \liel a a^2 \lier \jorr \lier 
  +2 \liel \jorl c a^2 \jorr \jorl a \jorl b \liel a b \lier \jorr \jorr \lier 
  -2 \liel \jorl c a^2 \jorr \jorl b \jorl a \liel a b \lier \jorr \jorr \lier 
  -  \liel \jorl a b^2 \jorr \jorl a \liel c a^2 \lier \jorr \lier 
  \\ 
  &
  -  \liel \jorl a b^2 \jorr \jorl c \liel a a^2 \lier \jorr \lier 
  +2 \liel \jorl b \jorl a b \jorr \jorr \jorl a \liel c a^2 \lier \jorr \lier 
  +2 \liel \jorl b \jorl a b \jorr \jorr \jorl c \liel a a^2 \lier \jorr \lier 
  -2 \liel \jorl c a^2 \jorr \jorl b \liel b a^2 \lier \jorr \lier 
  \\ 
  &
  -  \liel \jorl a b^2 \jorr \jorl \liel a c \lier a^2 \jorr \lier 
  +2 \liel \jorl b \jorl a b \jorr \jorr \jorl \liel a c \lier a^2 \jorr \lier 
  -2 \liel \jorl c a^2 \jorr \jorl \liel a b \lier \jorl a b \jorr \jorr \lier 
  \\ 
  \midrule
  % 
  % 7, 1 
  % 
  41^3 &
   4 \liel \jorl a \jorl a b \jorr \jorr \jorl a \jorl a \liel c d \lier \jorr \jorr \lier 
  -4 \liel \jorl a \jorl a c \jorr \jorr \jorl a \jorl a \liel b d \lier \jorr \jorr \lier 
  +4 \liel \jorl a \jorl a d \jorr \jorr \jorl a \jorl a \liel b c \lier \jorr \jorr \lier 
  -2 \liel \jorl b a^2 \jorr \jorl a \jorl a \liel c d \lier \jorr \jorr \lier 
  \\ 
  &
  +2 \liel \jorl c a^2 \jorr \jorl a \jorl a \liel b d \lier \jorr \jorr \lier 
  -2 \liel \jorl d a^2 \jorr \jorl a \jorl a \liel b c \lier \jorr \jorr \lier 
  -2 \liel \jorl a \jorl a b \jorr \jorr \jorl \liel c d \lier a^2 \jorr \lier 
  +2 \liel \jorl a \jorl a c \jorr \jorr \jorl \liel b d \lier a^2 \jorr \lier 
  \\ 
  &
  -2 \liel \jorl a \jorl a d \jorr \jorr \jorl \liel b c \lier a^2 \jorr \lier 
  +  \liel \jorl b a^2 \jorr \jorl \liel c d \lier a^2 \jorr \lier 
  -  \liel \jorl c a^2 \jorr \jorl \liel b d \lier a^2 \jorr \lier 
  +  \liel \jorl d a^2 \jorr \jorl \liel b c \lier a^2 \jorr \lier 
  \\ 
  \midrule
  % 
  % 8, 1 
  % 
  3^21 &
     \liel \jorl a b^2 \jorr \jorl a \jorl b \liel a c \lier \jorr \jorr \lier 
  +  \liel \jorl a b^2 \jorr \jorl b \jorl a \liel a c \lier \jorr \jorr \lier 
  -  \liel \jorl a \jorl b c \jorr \jorr \jorl a \jorl b \liel a b \lier \jorr \jorr \lier 
  -  \liel \jorl a \jorl b c \jorr \jorr \jorl b \jorl a \liel a b \lier \jorr \jorr \lier 
  \\ 
  &
  -  \liel \jorl b a^2 \jorr \jorl a \jorl b \liel b c \lier \jorr \jorr \lier 
  -  \liel \jorl b a^2 \jorr \jorl b \jorl a \liel b c \lier \jorr \jorr \lier 
  -  \liel \jorl b \jorl a c \jorr \jorr \jorl a \jorl b \liel a b \lier \jorr \jorr \lier 
  -  \liel \jorl b \jorl a c \jorr \jorr \jorl b \jorl a \liel a b \lier \jorr \jorr \lier 
  \\ 
  &
  +  \liel \jorl c \jorl a b \jorr \jorr \jorl a \jorl b \liel a b \lier \jorr \jorr \lier 
  +  \liel \jorl c \jorl a b \jorr \jorr \jorl b \jorl a \liel a b \lier \jorr \jorr \lier 
  -  \liel \jorl a b^2 \jorr \jorl \liel a c \lier \jorl a b \jorr \jorr \lier 
  +  \liel \jorl a \jorl b c \jorr \jorr \jorl \liel a b \lier \jorl a b \jorr \jorr \lier 
  \\ 
  &
  +  \liel \jorl b a^2 \jorr \jorl \liel b c \lier \jorl a b \jorr \jorr \lier 
  +  \liel \jorl b \jorl a c \jorr \jorr \jorl \liel a b \lier \jorl a b \jorr \jorr \lier 
  -  \liel \jorl c \jorl a b \jorr \jorr \jorl \liel a b \lier \jorl a b \jorr \jorr \lier 
  % 
  % 8, 2: omitted
  % 
  % 8, 3: omitted
  % 
  % 10, 1: omitted 
  % 
  % 13, 1: omitted 
  % 
  \end{array}
  }
  \]
  \caption{14 of the 20 new nonlinear Lie-Jordan identities in degree 7}
  \label{liejordeg7identities}
  \end{figure}

%%%%%%%%%%%%%%%%%%%%%%%%%%%%%%%%%%%%%%%%%%%%%%%%%%%%%%%%%%%%%%%%%%%%%%%%%%%%%%%%%%%%%%%%%%%%%%%%%%%%%%%%%%%%
%%%%%%%%%%%%%%%%%%%%%%%%%%%%%%%%%%%%%%%%%%%%%%%%%%%%%%%%%%%%%%%%%%%%%%%%%%%%%%%%%%%%%%%%%%%%%%%%%%%%%%%%%%%%
%%%%%%%%%%%%%%%%%%%%%%%%%%%%%%%%%%%%%%%%%%%%%%%%%%%%%%%%%%%%%%%%%%%%%%%%%%%%%%%%%%%%%%%%%%%%%%%%%%%%%%%%%%%%

\section{Two Lie brackets} \label{sectionlielie}

In this section we study the multilinear identities relating the two Lie brackets:
  \begin{equation} \label{2lie}
  \liel a, b \lier_\hor = a \hor b - b \hor a,
  \qquad\qquad
  \liel a, b \lier_\ver = a \ver b - b \ver a.
  \end{equation}

\begin{proposition} \label{propositionLL5}
In degree $\le 5$, every multilinear identity relating the Lie brackets \eqref{2lie}
in the free associative interchange algebra is a consequence of anticommutativity and the Jacobi 
identity for the two Lie brackets.
\end{proposition}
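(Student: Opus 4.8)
The strategy mirrors the proof of Proposition~\ref{propositionLJ6}, with the degree lowered by one and the Jordan product replaced by a second Lie bracket. The plan is to establish that there are no new identities in degree~$5$; by the same reduction used there---existence of a new identity in degree~$n$ forces one in degree~$n{+}1$, so that the absence of new identities in the top degree propagates downward to all lower degrees---this single computation settles every degree $\le 5$ at once. The structural changes from the Lie--Jordan case are that both operations are now anti-commutative, so the relevant monomials are the LL types rather than the LJ types, and that $\mathrm{Old}(5)$ is to be generated by the Jacobi identities for \emph{both} brackets $\liel -,-\lier_\hor$ and $\liel -,-\lier_\ver$ together with anti-commutativity, in place of one Jacobi and one Jordan identity.

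First I would assemble the combinatorial data in degree~$5$: there are $aa(4)=90$ AA types and $l\!j(5)=44$ LL types, and the partitions to be examined are the seven partitions of~$5$, namely $5,\,41,\,32,\,31^2,\,2^21,\,21^3,\,1^5$. I would then generate and reduce to AA normal form the consequences of the interchange identity~\eqref{interchange} in degree~$5$, which populate the block $\Xi$ of the expansion matrix~\eqref{blockmatrix}; the rows of $X$ are the images of the $44$ LL monomials under the expansion that sends each bracket symbol to the corresponding difference $a \ast b - b \ast a$. Row-reducing $B_5$ and reading off the lower right block gives, for each partition $\lambda$, a generating matrix for the isotypic component $\mathrm{All}_\lambda(5)$ of $[\lambda]$ in the kernel $\mathrm{All}(5)$ of $\widetilde E_5$.

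In parallel I would build $\mathrm{Old}(5)$ as the submodule generated by all consequences in degree~$5$ of the two Jacobi identities, apply the projection $R_\lambda$ to these generators, and row-reduce to obtain a matrix whose row space is $\mathrm{Old}_\lambda(5)$. This lifting is the step I expect to require the most care: because there are two brackets, each Jacobi identity in degree~$3$ must be pushed up through degrees~$4$ and~$5$ by applying the substitution and embedding operations of~\eqref{consequences} for \emph{each} of the two operations, and every resulting LL monomial must be normalized using the skew-symmetries of both brackets. The genuine obstacle is not conceptual but one of correctness: an incomplete or inconsistently normalized generating set for $\mathrm{Old}(5)$ would manufacture spurious ``new'' identities, so the completeness of this generating set is the point that must be checked with the greatest attention.

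The conclusion is then a direct comparison. For each of the seven partitions $\lambda$ of~$5$ I would verify that the RCF presenting $\mathrm{All}_\lambda(5)$ coincides with the RCF presenting $\mathrm{Old}_\lambda(5)$, so that $\mathrm{New}_\lambda(5) \cong \mathrm{All}_\lambda(5)/\mathrm{Old}_\lambda(5) = \{0\}$; equality across all $\lambda$ yields $\mathrm{New}(5) = \{0\}$, and the reduction of the first paragraph completes the proof. Since the degree is one lower than in the Lie--Jordan case and the modules $\mathrm{AA}(5)$ and $\mathrm{LL}(5)$ are correspondingly much smaller, the linear algebra is small enough to be carried out exactly over $\mathbb{Q}$, so that no modular arithmetic over $\mathbb{F}_p$ is needed here.
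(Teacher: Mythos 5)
Your proposal is correct and follows essentially the same route the paper intends: the paper omits this proof explicitly (per its remark about omitting computations very similar to ones already described), and the intended argument is exactly the degree-5 analogue of the proof of Proposition~\ref{propositionLJ6} --- reduce to checking that $\mathrm{New}(5)=\{0\}$, build the expansion matrix \eqref{blockmatrix} from the $90$ AA types and $44$ LL types, and verify partition by partition that the RCFs for $\mathrm{All}_\lambda(5)$ and $\mathrm{Old}_\lambda(5)$ (the latter generated by the consequences of anticommutativity and the two Jacobi identities) coincide. Your counts and list of partitions are right, and your emphasis on the completeness of the generating set for $\mathrm{Old}(5)$ is exactly where the care is needed.
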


\begin{lemma}
For every partition $\lambda$ of 6, the multiplicity of the simple $S_6$-module $[\lambda]$ in 
the modules $\mathrm{All}(6)$, $\mathrm{Old}(6)$, $\mathrm{New}(6)$ appears in Figure \ref{LL6mults}.
There are two new irreducible multilinear identities in degree 6, relating the two Lie brackets 
\eqref{2lie} in the free associative interchange algebra, which are not consequences of the identities 
of lower degree (Proposition \ref{propositionLL5}).
Both of these new identities occur for partition $\lambda = 21^4$.
\end{lemma}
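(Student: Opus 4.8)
The plan is to reuse the representation-theoretic expansion-matrix method that established Proposition \ref{propositionLJ6} and Lemma \ref{lemmaLJ7}, adapted to the two Lie brackets \eqref{2lie}. The central object is the expansion map $\widetilde E_6\colon \mathrm{LL}(6) \to \mathrm{AA}(6)/\mathrm{I\,I}(6)$ of \eqref{expansionmap}, whose kernel is $\mathrm{All}(6)$. The codomain data depend only on the two associative products, so they coincide with the Lie-Jordan computation: 394 AA types and 98 normalized consequences of the interchange identity \eqref{interchange} in degree 6. Only the domain changes. First I would enumerate the 164 LL types recursively, the two generating operations now being the brackets of \eqref{2lie}, and expand each type by substituting $a \hor b - b \hor a$ and $a \ver b - b \ver a$ for the two brackets, rewriting every resulting AA monomial into right-normed normal form.

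Working one partition $\lambda$ of 6 at a time as in \eqref{Sndecomp}, I would build the partition-$\lambda$ analogue of the block matrix \eqref{blockmatrix} by replacing the expansion block $X$ and the interchange block $\Xi$ with their images under the Clifton projection $R_\lambda$, reduce to RCF, and read the multiplicity of $[\lambda]$ in $\mathrm{All}(6)$ off the nonzero rows whose leading ones fall in the identity-block columns (those indexed beyond $\dim \mathrm{AA}(6)$). For $\mathrm{Old}(6)$ I would use that, by Proposition \ref{propositionLL5}, degrees $\le 5$ contribute nothing new, so the generators are exactly the degree-6 consequences of the two Jacobi identities, obtained by iterating the substitutions \eqref{consequences} with $n{+}1$ consequences per product since the brackets are skew-symmetric; anticommutativity is already absorbed into the normalized LL monomials. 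Projecting these generators by $R_\lambda$ and taking the rank gives the multiplicity of $[\lambda]$ in $\mathrm{Old}(6)$. Since $\mathrm{Old}(6) \subseteq \mathrm{All}(6)$ and $\mathrm{New}(6) \cong \mathrm{All}(6)/\mathrm{Old}(6)$, subtracting the two multiplicities fills in the $\mathrm{new}$ row of Figure \ref{LL6mults}.

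Running this across all eleven partitions of 6, I expect the $\mathrm{all}$ and $\mathrm{old}$ multiplicities to agree for every $\lambda$ except $\lambda = 21^4$, where the gap is 2; summing over partitions then yields exactly two new irreducible identities, both carried by $21^4$. Because $21^4$ has a tail of length four, representation theory permits presenting these identities as functions alternating in the final four indeterminates, which I would exploit to normalize far fewer monomials. The main obstacle is computational stability rather than strategy: the unreduced expansion matrix has $\dim \mathrm{AA}(6) = 394 \cdot 6!$ columns, so a rational RCF of the full matrix is impractical, and I would instead compute each $R_\lambda$-image over a prime $p > 6$ using symmetric residues. The delicate point is precisely the single-partition multiplicity gap at $21^4$: an off-by-one rank error there would spuriously create or erase the two genuinely new identities, so this is the computation I would verify most carefully, ideally by recomputing over $\mathbb{Q}$ for that one partition.
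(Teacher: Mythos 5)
Your proposal follows essentially the same route as the paper: the paper proves this lemma by the expansion-matrix method of Proposition \ref{propositionLJ6} and Lemma \ref{lemmaLJ7} (it explicitly omits the proof as ``based on computations very similar to those already described''), namely building the block matrix \eqref{blockmatrix} projected by $R_\lambda$ for each partition, comparing the ranks of $\mathsf{all}(\lambda)$ and $\mathsf{old}(\lambda)$, and reading off the multiplicity gap. Your numerical inputs (394 AA types, 98 interchange consequences, 164 LL types, $n{+}1$ consequences per skew-symmetric product) and your handling of $\mathrm{Old}(6)$ via the Jacobi consequences all match the paper's procedure, so the proposal is correct and not a new argument.
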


\begin{figure}[ht]
\small
\[
\boxed{
\begin{array}{cccccccccccc}
\lambda &\; 
6 &\;  51 &\;  42 &\;  41^2 &\;  33  &\; 321 &\;  31^3 &\;  2^3 &\;  2^21^2 &\;  21^4 &\;  1^6
\\
\midrule
\mathrm{all} &\;
164 &\; 788 &\; 1364 &\; 1496 &\; 734 &\; 2308 &\; 1422 &\; 704 &\; 1236 &\; 676 &\; 130
\\
\mathrm{old} &\;
164 &\; 788 &\; 1364 &\; 1496 &\; 734 &\; 2308 &\; 1422 &\; 704 &\; 1236 &\; 674 &\; 130
\\
\mathrm{new} &\;
0 &\; 0 &\; 0 &\; 0 &\; 0 &\; 0 &\; 0 &\; 0 &\; 0 &\; 2 &\; 0
\end{array}
}
\]
\caption{Multiplicities of new Lie-Lie identities in degree 6}
\label{LL6mults}
\end{figure}

\begin{theorem} \label{LL6theorem}
Every multilinear identity in degree 6 satisfied by the two Lie brackets \eqref{2lie} 
in the free associative interchange algebra is a consequence of:
\begin{enumerate}
\item[(i)]
anti-commutativity and the Jacobi identity for each Lie bracket;
\item[(ii)]
the following identity which is an alternating sum over all permutations $\sg$ of $\{b,c,d,e\}$ 
where $\epsilon(\sg)$ is the sign:
  \begin{equation}
  \label{newLL6}
  \left\{
  \begin{array}{r@{}l@{}}
  \sum_{\sg} 
  \epsilon(\sg)
  \Big\{ \,
  &
   2  [ [ a b^\sg ]_\hor [ a [ c^\sg [ d^\sg e^\sg ]_\hor ]_\ver ]_\hor ]_\ver
  -2  [ [ a b^\sg ]_\hor [ c^\sg [ a [ d^\sg e^\sg ]_\hor ]_\ver ]_\hor ]_\ver
  \\
  &
  +2  [ [ a b^\sg ]_\hor [ c^\sg [ d^\sg [ a e^\sg ]_\hor ]_\ver ]_\hor ]_\ver
  -   [ [ b^\sg c^\sg ]_\hor [ a [ d^\sg [ a e^\sg ]_\hor ]_\ver ]_\hor ]_\ver
  \\[2pt]
  &
  +   [ [ b^\sg c^\sg ]_\hor [ d^\sg [ a [ a e^\sg ]_\hor ]_\ver ]_\hor ]_\ver
  +   [ [ a b^\sg ]_\hor [ [ a c^\sg ]_\hor [ d^\sg e^\sg ]_\ver ]_\hor ]_\ver
  \\[2pt]
  &
  +2  [ [ a b^\sg ]_\hor [ [ c^\sg d^\sg ]_\hor [ a e^\sg ]_\ver ]_\hor ]_\ver 
  +   [ [ b^\sg c^\sg ]_\hor [ [ a d^\sg ]_\hor [ a e^\sg ]_\ver ]_\hor ]_\ver 
  \\[2pt]
  &
  +   [ [ a [ a b^\sg ]_\hor ]_\hor [ c^\sg [ d^\sg e^\sg ]_\ver ]_\hor ]_\ver
  -   [ [ a [ b^\sg c^\sg ]_\hor ]_\hor [ d^\sg [ a e^\sg ]_\ver ]_\hor ]_\ver
  \\
  &
  +   [ [ b^\sg [ a c^\sg ]_\hor ]_\hor [ a [ d^\sg e^\sg ]_\ver ]_\hor ]_\ver
  \, \Big\}
  \equiv 0;
  \end{array}
  \right.
  \end{equation}
\item[(ii)]
the identity obtained from equation \eqref{newLL6} by interchanging $\hor$ and $\ver$.
\end{enumerate}
\end{theorem}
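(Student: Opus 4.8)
The plan is to reduce the theorem to the construction of two explicit generators for a single isotypic component, using the multiplicity data already established in the preceding Lemma. By that Lemma and Figure~\ref{LL6mults}, the only partition of $6$ with $\mathrm{New}_\lambda(6) \neq \{0\}$ is $\lambda = 21^4$, where the multiplicity is $2$; for every other $\lambda$ the modules $\mathrm{All}(6)$ and $\mathrm{Old}(6)$ already agree. Since $\mathrm{All}(6) \cong \mathrm{Old}(6) \oplus \mathrm{New}(6)$ as $S_6$-modules, and $\mathrm{Old}(6)$ is by definition the submodule of degree-$6$ consequences of anti-commutativity and the Jacobi identity for the two brackets (part (i), which exhausts the lower-degree identities by Proposition~\ref{propositionLL5}), it suffices to exhibit two identities whose $S_6$-orbit spans the two-dimensional component $\mathrm{New}_{21^4}(6)$.

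First I would exploit the tail structure of $\lambda = 21^4$. This partition has a tail of length $4$, so the representation theory recalled at the end of Section~\ref{sectionpreliminaries} permits the assumption that every new identity is an alternating function of the four singleton indeterminates. Writing the doubled variable as $a$ and the singletons as $b,c,d,e$, each candidate is an alternating sum over the $24$ permutations $\sg$ of $\{b,c,d,e\}$ applied to a fixed arrangement of LL monomials on the multiset $a^2bcde$, which is precisely the shape of \eqref{newLL6}. The nonlinear-monomial method then turns the search into a small linear-algebra problem: for each admissible LL type on $a^2bcde$ I would compute the coefficient vector of its image under the expansion map $\widetilde E_6$ of \eqref{expansionmap}, assemble these rows into an expansion matrix as in \eqref{blockmatrix}, and read off from its RCF two kernel elements with short support. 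Identity \eqref{newLL6} is one such element.

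The second generator comes from the operadic symmetry $\hor \leftrightarrow \ver$. Swapping the two operations in \eqref{interchange} and then transposing two variables returns the same interchange identity, and the swap obviously preserves associativity of each product; hence $\hor \leftrightarrow \ver$ is an automorphism of the associative interchange operad and carries identities to identities. Applying it to \eqref{newLL6} produces the identity obtained by interchanging $\hor$ and $\ver$, which is therefore automatically valid for the two Lie brackets.

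The crux, and the step I expect to be the main obstacle, is to verify that \eqref{newLL6} and its swap are linearly independent modulo $\mathrm{Old}(6)$, so that together they generate all of $\mathrm{New}_{21^4}(6)$ rather than a one-dimensional subspace. Since $\hor \leftrightarrow \ver$ acts as an involution on the two-dimensional space $\mathrm{New}_{21^4}(6)$, the danger is that \eqref{newLL6} is an eigenvector, in which case its swap would coincide with it up to sign and an old identity, leaving the second dimension unaccounted for. To rule this out I would form the representation matrix $R_{21^4}$ of each identity, combining horizontally the $d(21^4) \times d(21^4)$ blocks indexed by the LL types, reduce both against the matrix whose row space is $\mathrm{Old}_{21^4}(6)$, and confirm that the two residues have rank $2$. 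Once this independence is established, the complete linearizations of the two displayed identities, together with $\mathrm{Old}(6)$, recover $\mathrm{All}_\lambda(6)$ for every partition $\lambda$, which is exactly the assertion of the theorem.
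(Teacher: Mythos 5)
Your proposal is correct and follows essentially the same route as the paper, which omits an explicit proof of this theorem and defers to the computational machinery already described: the multiplicity table isolates $\lambda = 21^4$ as the only partition with new identities, the length-4 tail justifies the alternating-sum form over $\{b,c,d,e\}$ on the multiset $a^2bcde$, and the expansion matrix plus representation matrices reduced against $\mathrm{Old}_{21^4}(6)$ certify that the displayed identity and its $\hor\leftrightarrow\ver$ image account for both copies of $[21^4]$. Your explicit observation that $\hor\leftrightarrow\ver$ is an automorphism of the associative interchange operad, and your identification of the eigenvector danger in the multiplicity space as the step genuinely requiring verification, are exactly the points the paper's implicit computation settles.
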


\begin{lemma} \label{lemmaLL7}
For every partition $\lambda$ of 7, the multiplicity of the simple $S_7$-module $[\lambda]$ in 
the modules $\mathrm{All}(7)$, $\mathrm{Old}(7)$, $\mathrm{New}(7)$ appears in Figure \ref{LL7mults}.
There are two new irreducible multilinear identities in degree 7, relating the two Lie brackets
in the free associative interchange algebra, which are not consequences of the identities of 
lower degree (Theorem \ref{LL6theorem}).
These new identities occur for the last two partitions, $\lambda = 21^5$ and $\lambda = 1^7$. 
\end{lemma}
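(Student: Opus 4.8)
The plan is to follow the same representation-theoretic computation used for Lemma \ref{lemmaLJ7} and Theorem \ref{LL6theorem}, now specialized to the horizontal and vertical Lie brackets $\liel a,b\lier_\hor$ and $\liel a,b\lier_\ver$ of \eqref{2lie}. First I would build the LL expansion map $\widetilde E_7\colon \mathrm{LL}(7) \to \mathrm{AA}(7)/\mathrm{I\,I}(7)$, which sends each LL monomial to the element of the associative interchange operad obtained by replacing every occurrence of the first anticommutative symbol by $\liel a,b\lier_\hor$ and every occurrence of the second by $\liel a,b\lier_\ver$, then projecting modulo the consequences of the interchange identity \eqref{interchange}. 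The module $\mathrm{All}(7)$ of all multilinear identities relating the two brackets is $\ker \widetilde E_7$. Since $\dim \mathrm{AA}(7) = aa(6)\,7!$ is far too large to handle directly, I would work one partition $\lambda$ of $7$ at a time.

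Second, for each $\lambda$ I would apply the projection $R_\lambda$ of \eqref{Sndecomp} (via our implementation of Clifton's algorithm) to the LL generators and to the normalized interchange consequences, assembling the block expansion matrix \eqref{blockmatrix} in the reduced representation. Computing $\mathrm{RCF}$ and reading off the lower-right block $K_7$ as in the construction of $\mathrm{All}(n)$ yields the multiplicity of $[\lambda]$ in $\mathrm{All}(7)$, giving the $\mathrm{all}$ row of Figure \ref{LL7mults}.

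Third, I would generate $\mathrm{Old}(7)$ from the previously established identities of lower degree: anticommutativity and Jacobi for each bracket, together with the two new degree-6 identities of Theorem \ref{LL6theorem}, namely \eqref{newLL6} and its $\hor \leftrightarrow \ver$ image. Each lower-degree identity contributes $n{+}1$ consequences per operation (the last two consequences in \eqref{consequences} coincide up to sign for skew-symmetric products); iterating up to degree $7$ produces a generating set for $\mathrm{Old}(7)$. Applying $R_\lambda$ and computing $\mathrm{RCF}$ gives the $\mathrm{old}$ row of Figure \ref{LL7mults}. Because $\mathrm{Old}(7) \subseteq \mathrm{All}(7)$ and $\mathrm{New}(7) \cong \mathrm{All}(7)/\mathrm{Old}(7)$, subtracting the two rows yields the $\mathrm{new}$ multiplicities, which I expect to vanish for every $\lambda$ except $\lambda = 21^5$ and $\lambda = 1^7$, each contributing multiplicity $1$, for a total of two new irreducible identities.

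The main obstacle will be the size of the degree-7 computation: with $aa(6) = 1806$ AA types and $l\!j(7) = 616$ LL types, the unreduced expansion matrix is enormous, so the calculation is only feasible after the reduction by $R_\lambda$, and even then the rational $\mathrm{RCF}$ may be out of reach for the larger partitions. I would therefore carry out the rank computations over a prime field $\mathbb{F}_p$ with $p > 7$, where complete reducibility still holds and the $\{0,\pm 1\}$ structure constants agree with characteristic $0$, and verify any borderline multiplicities with a second prime to rule out accidental rank drops. A secondary subtlety is ensuring the generating set for $\mathrm{Old}(7)$ is complete: the consequences of the degree-6 identities of Theorem \ref{LL6theorem} must be included, since omitting them would spuriously inflate the new multiplicities and manufacture identities that are in fact already known.
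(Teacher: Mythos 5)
Your proposal matches the paper's approach exactly: the paper omits an explicit proof of this lemma precisely because it is the same partition-by-partition expansion-matrix computation described in Proposition \ref{propositionLJ6} and Lemma \ref{lemmaLJ7}, with $\mathrm{Old}(7)$ generated by anticommutativity, the two Jacobi identities, and the consequences of the two degree-6 identities of Theorem \ref{LL6theorem}. Your attention to including those degree-6 consequences and to working modulo a prime $p>7$ is consistent with the paper's stated methodology, so nothing further is needed.
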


\begin{figure}[ht]
\small
\[
\boxed{
\begin{array}{c}
\begin{array}{c|ccccccc}
\lambda &\;\quad 
7 &\;\quad  61 &\;\quad  52 &\;\quad  51^2 &\;\quad  43 &\;\quad  421 &\;\quad  41^3 
\\
\mathrm{all} &\;\quad
616 &\;\quad 3632 &\;\quad 8284 &\;\quad 8844 &\;\quad 8128 &\;\quad 20140 &\;\quad 11416 
\\
\mathrm{old} &\;\quad
616 &\;\quad 3632 &\;\quad 8284 &\;\quad 8844 &\;\quad 8128 &\;\quad 20140 &\;\quad 11416 
\\
\mathrm{new} &\;\quad
0 &\;\quad 0 &\;\quad 0 &\;\quad 0 &\;\quad 0 &\;\quad 0 &\;\quad 0
\end{array}
\\
\midrule
\begin{array}{cccccccc}
3^21 &\;\quad  32^2 &\;\quad  321^2 &\;\quad  31^4 &\;\quad  2^31 &\;\quad  2^21^3 &\;\quad  21^5 &\;\quad  1^7
\\
11906 &\;\quad 11804 &\;\quad 19550 &\;\quad 8276 &\;\quad 7672 &\;\quad 7602 &\;\quad 3211 &\;\quad 525
\\
11906 &\;\quad 11804 &\;\quad 19550 &\;\quad 8276 &\;\quad 7672 &\;\quad 7602 &\;\quad 3210 &\;\quad 524
\\
0 &\;\quad 0 &\;\quad 0 &\;\quad 0 &\;\quad 0 &\;\quad 0 &\;\quad 1 &\;\quad 1
\end{array}
\end{array}
}
\]
\caption{Multiplicities of new Lie-Lie identities in degree 7}
\label{LL7mults}
\end{figure}

\begin{theorem} \label{LL7theorem}
Every multilinear identity in degree 7 satisfied by the two Lie brackets \eqref{2lie} 
in the free associative interchange algebra is a consequence of:
\begin{enumerate}
\item[(i)]
anti-commutativity and the Jacobi identity for each Lie bracket;
\item[(ii)]
the identity of equation \eqref{newLL6} and its image under the interchange of $\hor$ and $\ver$;
\item[(iii)]
the new nonlinear identity for $\lambda = 21^5$ in Figure \ref{newlielie7} which has 56 terms;
\item[(iv)]
the following new identity for $\lambda = 1^7$ which is an alternating sum over all permutations $\sg$ 
of $\{a,b,c,d,e,f,g\}$ where $\epsilon(\sg)$ is the sign:
\begin{equation}
\label{newLL7lastpartition}
\left\{ \quad
\begin{array}{l}
\sum_{\sg}
\epsilon(\sg)
\Big\{
  [ [ a^\sg b^\sg ]_\hor [ [ c^\sg d^\sg ]_\ver [ e^\sg [ f^\sg g^\sg ]_\hor ]_\ver ]_\hor ]_\ver 
\\[2pt]
\qquad
+ [ [ a^\sg b^\sg ]_\hor [ [ c^\sg d^\sg ]_\ver [ e^\sg [ f^\sg g^\sg ]_\ver ]_\hor ]_\hor ]_\ver 
\\[2pt]
\qquad\qquad
- 
[ [ a^\sg b^\sg ]_\ver [ [ c^\sg d^\sg ]_\hor [ e^\sg [ f^\sg g^\sg ]_\hor ]_\ver ]_\ver ]_\hor 
\\[2pt]
\qquad\qquad\qquad
-  
[ [ a^\sg b^\sg ]_\ver [ [ c^\sg d^\sg ]_\hor [ e^\sg [ f^\sg g^\sg ]_\ver ]_\hor ]_\ver ]_\hor 
\Big\}
\equiv 0.
\end{array}
\right.
\end{equation}
\end{enumerate}
\end{theorem}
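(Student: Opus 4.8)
The proof of Theorem \ref{LL7theorem} follows the same computational template established in Proposition \ref{propositionLJ6} and refined in Lemma \ref{lemmaLL7}, so the plan is to leverage the multiplicity data of Figure \ref{LL7mults} and reduce the theorem to the explicit exhibition of generators for each nonzero $\mathrm{New}_\lambda(7)$. By Lemma \ref{lemmaLL7} the only partitions with $\mathrm{New}_\lambda(7) \ne \{0\}$ are $\lambda = 21^5$ (multiplicity 1) and $\lambda = 1^7$ (multiplicity 1). Hence the entire content of the theorem is: first, that items (i) and (ii) already generate all of $\mathrm{Old}(7)$; and second, that adding one suitable generator in each of these two partitions accounts for $\mathrm{New}(7)$ exactly.

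First I would fix the ordered monomial bases of $\mathrm{LL}(7)$ and $\mathrm{AA}(7)$ and, working partition by partition, form the representation matrix $R_\lambda(\widetilde E_7)$ as described in the Preliminaries, computing its RCF over a suitable $\mathbb{F}_p$ with $p > 7$ to read off $\mathrm{All}_\lambda(7)$. In parallel I would generate the consequences in degree 7 of the degree-6 identity \eqref{newLL6}, its $\hor \leftrightarrow \ver$ image, and the Jacobi consequences (the $n{+}1$ liftings of \eqref{consequences} for each bracket, iterated from degree 6), apply $R_\lambda$, and take the RCF to obtain $\mathrm{Old}_\lambda(7)$. For every $\lambda$ other than $21^5$ and $1^7$ these two RCFs coincide, confirming that items (i) and (ii) already capture all identities in those representations; this is the same verification performed in Lemma \ref{lemmaLL7} and needs only to be recorded. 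For $\lambda = 21^5$ and $\lambda = 1^7$ the rank of $\mathrm{All}_\lambda(7)$ exceeds that of $\mathrm{Old}_\lambda(7)$ by exactly one, matching the $\mathrm{new}$ row of Figure \ref{LL7mults}.

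The remaining step is to produce explicit generators realizing these two extra multiplicities and to verify that they lie in $\mathrm{All}(7)$ but not in $\mathrm{Old}(7)$. For $\lambda = 1^7$ the isotypic component is spanned by alternating sums over all of $S_7$, so I would exhibit the specific alternating sum \eqref{newLL7lastpartition} and check, by applying $\widetilde E_7$ and reducing modulo $\mathrm{I\,I}(7)$, that it evaluates to zero in $\mathrm{AA}(7)/\mathrm{I\,I}(7)$; independence from $\mathrm{Old}(7)$ follows because the multiplicity of $[1^7]$ in $\mathrm{Old}(7)$ is one less than in $\mathrm{All}(7)$. For $\lambda = 21^5$, which has a tail of length $5$, representation theory lets me assume the identity is alternating in the last five indeterminates, so I would use the nonlinear/multiset technique of the proof of Theorem \ref{LJ7theorem}: restrict to the small set of normalized monomials for the multiset $a^2bcdef$ (skew-symmetrized over $\{b,c,d,e,f\}$), solve the resulting small linear system for a vector in $\mathrm{All}_{21^5}(7) \setminus \mathrm{Old}_{21^5}(7)$, and present the 56-term nonlinear form given in Figure \ref{newlielie7}. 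Finally, reversing the expansion shows the linearizations of these two identities together with items (i) and (ii) generate a submodule whose isotypic multiplicities agree with the $\mathrm{all}$ row of Figure \ref{LL7mults} in every partition, hence equal $\mathrm{All}(7)$.

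The main obstacle is not conceptual but the bookkeeping needed to guarantee that the exhibited generators genuinely achieve the full new multiplicity rather than landing inside $\mathrm{Old}(7)$; this requires an honest rank comparison of $\mathrm{all}(\lambda)$ against $\mathrm{old}(\lambda)$ augmented by each candidate in the partitions $21^5$ and $1^7$. A secondary difficulty is the correct normalization of monomials under the combined skew-symmetries of the two Lie brackets when passing between the multilinear and nonlinear (multiset) presentations, since an error there would misreport the term count or silently place the candidate in the wrong isotypic component. Because both obstacles are handled by the same RCF-over-$\mathbb{F}_p$ machinery already validated in the earlier sections, I would, following the \textbf{Remark} after Theorem \ref{LJ7theorem}, suppress the routine linear-algebra details and present only the verification that the two displayed identities supply the two missing multiplicities.
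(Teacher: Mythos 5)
Your proposal is correct and follows essentially the same route the paper takes: the paper explicitly omits this proof (see the Remark after Theorem \ref{LJ7theorem}) precisely because it is the same RCF/representation-theoretic comparison of $\mathrm{All}_\lambda(7)$ against $\mathrm{Old}_\lambda(7)$, combined with the nonlinear multiset technique from the proof of Theorem \ref{LJ7theorem} for $\lambda = 21^5$ and the alternating-sum special case for $\lambda = 1^7$. Nothing in your plan deviates from or falls short of the paper's intended argument.
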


\begin{figure}[ht]
\small
\[
\boxed{
\begin{array}{l}
 2 [ [ a b ]_\hor [ a [ [ c d ]_\hor [ e f ]_\ver ]_\ver ]_\hor ]_\ver  %   1 
{} +2 [ [ b c ]_\hor [ a [ [ a d ]_\hor [ e f ]_\ver ]_\ver ]_\hor ]_\ver  %   2 
{} +2 [ [ b c ]_\hor [ a [ [ d e ]_\hor [ a f ]_\ver ]_\ver ]_\hor ]_\ver  %   3 
\\[2pt] 
{} +4 [ [ a b ]_\hor [ [ a c ]_\ver [ d [ e f ]_\hor ]_\ver ]_\hor ]_\ver  %   4 
{} -4 [ [ a b ]_\hor [ [ c d ]_\ver [ e [ a f ]_\hor ]_\ver ]_\hor ]_\ver  %   5 
{} -2 [ [ b c ]_\hor [ [ a d ]_\ver [ a [ e f ]_\hor ]_\ver ]_\hor ]_\ver  %   6 
\\[2pt] 
{} -2 [ [ b c ]_\hor [ [ d e ]_\ver [ a [ a f ]_\hor ]_\ver ]_\hor ]_\ver  %   7 
{} +2 [ [ a b ]_\hor [ [ a c ]_\ver [ d [ e f ]_\ver ]_\hor ]_\hor ]_\ver  %   8 
{} +2 [ [ a b ]_\hor [ [ c d ]_\ver [ a [ e f ]_\ver ]_\hor ]_\hor ]_\ver  %   9 
\\[2pt] 
{} -2 [ [ a b ]_\hor [ [ c d ]_\ver [ e [ a f ]_\ver ]_\hor ]_\hor ]_\ver  %  10 
{}  + [ [ b c ]_\hor [ [ a d ]_\ver [ a [ e f ]_\ver ]_\hor ]_\hor ]_\ver  %  11 
{}  + [ [ b c ]_\hor [ [ d e ]_\ver [ a [ a f ]_\ver ]_\hor ]_\hor ]_\ver  %  12 
\\[2pt] 
{} +2 [ [ a b ]_\ver [ a [ [ c d ]_\hor [ e f ]_\ver ]_\hor ]_\ver ]_\hor  %  13 
{} +2 [ [ b c ]_\ver [ a [ [ a d ]_\hor [ e f ]_\ver ]_\hor ]_\ver ]_\hor  %  14 
{} +2 [ [ b c ]_\ver [ a [ [ d e ]_\hor [ a f ]_\ver ]_\hor ]_\ver ]_\hor  %  15 
\\[2pt] 
{} -2 [ [ a b ]_\ver [ [ a c ]_\hor [ d [ e f ]_\hor ]_\ver ]_\ver ]_\hor  %  16 
{} -2 [ [ a b ]_\ver [ [ c d ]_\hor [ a [ e f ]_\hor ]_\ver ]_\ver ]_\hor  %  17 
{} +2 [ [ a b ]_\ver [ [ c d ]_\hor [ e [ a f ]_\hor ]_\ver ]_\ver ]_\hor  %  18 
\\[2pt] 
{}  - [ [ b c ]_\ver [ [ a d ]_\hor [ a [ e f ]_\hor ]_\ver ]_\ver ]_\hor  %  19 
{}  - [ [ b c ]_\ver [ [ d e ]_\hor [ a [ a f ]_\hor ]_\ver ]_\ver ]_\hor  %  20 
{} -4 [ [ a b ]_\ver [ [ a c ]_\hor [ d [ e f ]_\ver ]_\hor ]_\ver ]_\hor  %  21 
\\[2pt] 
{} +4 [ [ a b ]_\ver [ [ c d ]_\hor [ e [ a f ]_\ver ]_\hor ]_\ver ]_\hor  %  22 
{} +2 [ [ b c ]_\ver [ [ a d ]_\hor [ a [ e f ]_\ver ]_\hor ]_\ver ]_\hor  %  23 
{} +2 [ [ b c ]_\ver [ [ d e ]_\hor [ a [ a f ]_\ver ]_\hor ]_\ver ]_\hor  %  24 
\\[2pt] 
{}  + [ [ a [ b c ]_\hor ]_\ver [ a [ d [ e f ]_\hor ]_\ver ]_\ver ]_\hor  %  25 
{}  + [ [ a [ b c ]_\hor ]_\ver [ d [ a [ e f ]_\hor ]_\ver ]_\ver ]_\hor  %  26 
{} -3 [ [ b [ a c ]_\hor ]_\ver [ a [ d [ e f ]_\hor ]_\ver ]_\ver ]_\hor  %  27 
\\[2pt] 
{}  - [ [ b [ a c ]_\hor ]_\ver [ d [ a [ e f ]_\hor ]_\ver ]_\ver ]_\hor  %  28 
{} -2 [ [ b [ c d ]_\hor ]_\ver [ a [ a [ e f ]_\hor ]_\ver ]_\ver ]_\hor  %  29 
{} +3 [ [ b [ c d ]_\hor ]_\ver [ a [ e [ a f ]_\hor ]_\ver ]_\ver ]_\hor  %  30 
\\[2pt] 
{}  + [ [ b [ c d ]_\hor ]_\ver [ e [ a [ a f ]_\hor ]_\ver ]_\ver ]_\hor  %  31 
{} -4 [ [ a [ a b ]_\hor ]_\ver [ c [ d [ e f ]_\ver ]_\hor ]_\ver ]_\hor  %  32 
{} -2 [ [ a [ b c ]_\hor ]_\ver [ a [ d [ e f ]_\ver ]_\hor ]_\ver ]_\hor  %  33 
\\[2pt] 
{} +2 [ [ a [ b c ]_\hor ]_\ver [ d [ a [ e f ]_\ver ]_\hor ]_\ver ]_\hor  %  34 
{} -4 [ [ a [ b c ]_\hor ]_\ver [ d [ e [ a f ]_\ver ]_\hor ]_\ver ]_\hor  %  35 
{} +2 [ [ a [ a b ]_\hor ]_\ver [ [ c d ]_\hor [ e f ]_\ver ]_\ver ]_\hor  %  36 
\\[2pt] 
{} +2 [ [ a [ b c ]_\hor ]_\ver [ [ a d ]_\hor [ e f ]_\ver ]_\ver ]_\hor  %  37 
{}  + [ [ a [ b c ]_\hor ]_\ver [ [ d e ]_\hor [ a f ]_\ver ]_\ver ]_\hor  %  38 
{}  + [ [ b [ a c ]_\hor ]_\ver [ [ d e ]_\hor [ a f ]_\ver ]_\ver ]_\hor  %  39 
\\[2pt] 
{}  + [ [ b [ c d ]_\hor ]_\ver [ [ a e ]_\hor [ a f ]_\ver ]_\ver ]_\hor  %  40 
{} +4 [ [ a [ a b ]_\ver ]_\hor [ c [ d [ e f ]_\hor ]_\ver ]_\hor ]_\ver  %  41 
{} +2 [ [ a [ b c ]_\ver ]_\hor [ a [ d [ e f ]_\hor ]_\ver ]_\hor ]_\ver  %  42 
\\[2pt] 
{} -2 [ [ a [ b c ]_\ver ]_\hor [ d [ a [ e f ]_\hor ]_\ver ]_\hor ]_\ver  %  43 
{} +4 [ [ a [ b c ]_\ver ]_\hor [ d [ e [ a f ]_\hor ]_\ver ]_\hor ]_\ver  %  44 
{}  - [ [ a [ b c ]_\ver ]_\hor [ a [ d [ e f ]_\ver ]_\hor ]_\hor ]_\ver  %  45 
\\[2pt] 
{}  - [ [ a [ b c ]_\ver ]_\hor [ d [ a [ e f ]_\ver ]_\hor ]_\hor ]_\ver  %  46 
{} +3 [ [ b [ a c ]_\ver ]_\hor [ a [ d [ e f ]_\ver ]_\hor ]_\hor ]_\ver  %  47 
{}  + [ [ b [ a c ]_\ver ]_\hor [ d [ a [ e f ]_\ver ]_\hor ]_\hor ]_\ver  %  48 
\\[2pt] 
{} +2 [ [ b [ c d ]_\ver ]_\hor [ a [ a [ e f ]_\ver ]_\hor ]_\hor ]_\ver  %  49 
{} -3 [ [ b [ c d ]_\ver ]_\hor [ a [ e [ a f ]_\ver ]_\hor ]_\hor ]_\ver  %  50 
{}  - [ [ b [ c d ]_\ver ]_\hor [ e [ a [ a f ]_\ver ]_\hor ]_\hor ]_\ver  %  51 
\\[2pt] 
{} +2 [ [ a [ a b ]_\ver ]_\hor [ [ c d ]_\hor [ e f ]_\ver ]_\hor ]_\ver  %  52 
{}  + [ [ a [ b c ]_\ver ]_\hor [ [ a d ]_\hor [ e f ]_\ver ]_\hor ]_\ver  %  53 
{} +2 [ [ a [ b c ]_\ver ]_\hor [ [ d e ]_\hor [ a f ]_\ver ]_\hor ]_\ver  %  54 
\\[2pt] 
{}  + [ [ b [ a c ]_\ver ]_\hor [ [ a d ]_\hor [ e f ]_\ver ]_\hor ]_\ver  %  55 
{}  - [ [ b [ c d ]_\ver ]_\hor [ [ a e ]_\hor [ a f ]_\ver ]_\hor ]_\ver  %  56 
\end{array}
}
\]
\caption{New nonlinear Lie-Lie identity in degree 7 for $\lambda = 21^5$}
\label{newlielie7}
\end{figure}

%%%%%%%%%%%%%%%%%%%%%%%%%%%%%%%%%%%%%%%%%%%%%%%%%%%%%%%%%%%%%%%%%%%%%%%%%%%%%%%%%%%%%%%%%%%%%%%%%%%%%%%%%%%%
%%%%%%%%%%%%%%%%%%%%%%%%%%%%%%%%%%%%%%%%%%%%%%%%%%%%%%%%%%%%%%%%%%%%%%%%%%%%%%%%%%%%%%%%%%%%%%%%%%%%%%%%%%%%
%%%%%%%%%%%%%%%%%%%%%%%%%%%%%%%%%%%%%%%%%%%%%%%%%%%%%%%%%%%%%%%%%%%%%%%%%%%%%%%%%%%%%%%%%%%%%%%%%%%%%%%%%%%%

\section{Two Jordan products} \label{sectionjordanjordan}

In this section we study the multilinear identities relating the two Jordan products:
  \begin{equation} \label{2jordan}
  \jorl a, b \jorr_\hor = a \hor b + b \hor a,
  \qquad\qquad
  \jorl a, b \jorr_\ver = a \ver b + b \ver a.
  \end{equation}

\begin{proposition} \label{propositionJJ4}
Every multilinear identity in degree 4 satisfied by the Jordan products \eqref{2jordan} in the free 
associative interchange algebra is a consequence of the following identities:
\begin{enumerate}
\item[(i)]
commutativity and the (linearized) Jordan identity for each Jordan product;
\item[(ii)]
the linearization of the following new identity relating the two Jordan products:
  \begin{equation}
  \label{newJJ4}
  \jorl \jorl a, a \jorr_\hor, \jorl a, a \jorr_\hor \jorr_\ver 
  - 
  \jorl \jorl a, a \jorr_\ver, \jorl a, a \jorr_\ver \jorr_\hor
  \equiv 0.
  \end{equation}
\end{enumerate}
\end{proposition}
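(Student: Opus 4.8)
The plan is to instantiate the expansion-map framework of Section~\ref{sectionpreliminaries} in degree $n=4$, compute the three $S_4$-modules $\mathrm{All}(4)$, $\mathrm{Old}(4)$ and the quotient $\mathrm{New}(4)$, and then exhibit the linearization of \eqref{newJJ4} as a generator of $\mathrm{New}(4)$. Since $4!=24$ is small, everything can be done over $\mathbb{Q}$, either on the full multilinear spaces or, following the pattern of Proposition~\ref{propositionLJ6}, one partition $\lambda \vdash 4$ at a time; I would use the latter so as to read off the multiplicities of each $[\lambda]$ directly.

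Before computing anything I would record the cheap fact that \eqref{newJJ4} really is an identity. Setting all four arguments equal in the interchange identity \eqref{interchange} gives $(a \hor a)\ver(a \hor a) \equiv (a \ver a)\hor(a \ver a)$. Because $\jorl a,a\jorr_\hor = 2(a\hor a)$ and $\jorl a,a\jorr_\ver = 2(a\ver a)$, the two terms of \eqref{newJJ4} equal $8\,(a\hor a)\ver(a\hor a)$ and $8\,(a\ver a)\hor(a\ver a)$, whose difference is $0$. Hence the complete linearization $\bar f$ of \eqref{newJJ4} lies in $\ker\widetilde E_4 = \mathrm{All}(4)$. Moreover $\bar f \neq 0$ in $\mathrm{LJ}(4)$: in characteristic $0$ restitution recovers a nonzero multiple of \eqref{newJJ4}, which is itself a nonzero combination of two distinct JJ types. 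Since $\bar f$ is symmetric in its four arguments, it lives in the isotypic component of the trivial module $[4]$.

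The computational core is then standard. First enumerate the $aa(3)=22$ AA types and the $l\!j(4)=14$ JJ types in degree $4$ and fix ordered monomial bases of $\mathrm{AA}(4)$ and $\mathrm{LJ}(4)$. Next build the expansion matrix $B_4$ of \eqref{blockmatrix}, whose $X$-block records the expansions of the JJ monomials under $\ast_1 \mapsto \jorl-,-\jorr_\hor$ and $\ast_2 \mapsto \jorl-,-\jorr_\ver$, and whose $\Xi$-block records the normalized degree-$4$ consequences of \eqref{interchange}; reducing $B_4$ yields $\mathrm{All}(4)$ from the lower-right block. Separately, generate $\mathrm{Old}(4)$ as the $S_4$-submodule spanned by the linearized Jordan identities for the two products $\jorl-,-\jorr_\hor$ and $\jorl-,-\jorr_\ver$. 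Here one must note that commutativity is already built into the JJ basis and that there are no new Jordan-Jordan identities in degrees $2$ and $3$ (the interchange identity, being of degree $4$ in the variables, has no consequences below degree $4$, and a single Jordan product has no identity below degree $4$ beyond commutativity), so these two Jordan identities are the only generators of $\mathrm{Old}(4)$. Applying the projections $R_\lambda$ for the five partitions $4,31,2^2,21^2,1^4$ and comparing ranks then gives the multiplicities of each $[\lambda]$ in $\mathrm{All}(4)$, $\mathrm{Old}(4)$ and $\mathrm{New}(4)$.

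I expect the outcome to be $\mathrm{New}_\lambda(4)=0$ for every $\lambda \neq 4$ and $\mathrm{New}_4(4)=1$, so that $\mathrm{New}(4)\cong[4]$. To finish, I would check by a single rank computation that adjoining $R_4(\bar f)$ to $R_4(\mathrm{Old}(4))$ strictly increases the rank, i.e.\ $\bar f \notin \mathrm{Old}(4)$; together with $\mathrm{New}_4(4)=1$ this shows that $\bar f$ spans the one-dimensional $[4]$-component of $\mathrm{New}(4)$ and therefore generates it, proving the proposition. The step that needs the most care is the correct assembly of $\mathrm{Old}(4)$: unlike the higher-degree cases, the defining Jordan identity is here itself of degree $4$ rather than a strict lower-degree input, so it must be entered directly, and the absence of new identities in degrees $2$ and $3$ must be confirmed separately so that no further generators of $\mathrm{Old}(4)$ are overlooked. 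The linear algebra at this size is otherwise routine.
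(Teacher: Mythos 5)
Your proposal is correct and follows essentially the same route as the paper: compute $\mathrm{All}_\lambda(4)$ and $\mathrm{Old}_\lambda(4)$ partition by partition via the expansion matrix, observe that they differ only for $\lambda=4$ with one extra row, and identify that row with the linearization of \eqref{newJJ4} (a difference of symmetric sums over two JJ types). Your preliminary check that \eqref{newJJ4} holds by setting all arguments equal in the interchange identity is a nice explicit touch the paper leaves implicit, but it does not change the argument.
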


\begin{proof}
Computations using the representation theory of $S_4$ show that the matrices $\mathsf{all}(\lambda)$ and
$\mathsf{old}(\lambda)$ are equal for all partitions except the first, $\lambda =  4$.
In this case we have
  \[
  \mathsf{all}(\lambda) 
  = 
  \left[ 
  \begin{array}{rrrrrrrrrrrrrr}
  1 &\; . &\; . &\; . &\; . &\; . &\; . &\; . &\; -1 &\; . &\; . &\; . &\; . &\; . \\[-1pt]
  . &\; . &\; . &\; . &\; . &\; . &\; . &\; 1 &\; . &\; . &\; . &\; . &\; . &\; -1 \\[-1pt]
  . &\; . &\; . &\; . &\; . &\; . &\; . &\; . &\; . &\; 1 &\; . &\; . &\; -1 &\; 0
  \end{array} 
  \right],
  \]
and $\mathsf{old}(\lambda)$ consists of the first two rows of $\mathsf{all}(\lambda)$.
Hence the third row of $\mathsf{all}(\lambda)$ represents a new multilinear identity $I$, which is 
the difference of the symmetric sums over all permutations of the variables in JJ types 10 and 13
(the positions of the nonzero entries in row 3).
These are the JJ types in the nonlinear identity \eqref{newJJ4}: in characteristic 0 or $p > 4$ this 
nonlinear identity is equivalent to the multilinear identity $I$.

Identity \eqref{newJJ4} is a Jordan analogue of the interchange identity \eqref{interchange}:
it states the equality of two monomials with the same association types as the interchange identity, 
but using Jordan products instead.
However, \eqref{newJJ4} involves only one variable whereas \eqref{interchange} is multilinear
in four variables; 
we can make \eqref{newJJ4} multilinear by replacing each monomial by the symmetric sum over all 
permutations of $\{a,b,c,d\}$.
\end{proof}

\begin{proposition} \label{JJ5proposition}
Every multilinear identity in degree 5 satisfied by the Jordan products \eqref{2jordan} in the free 
associative interchange algebra is a consequence of the following identities:
\begin{enumerate}
\item[(i)]
commutativity and the (linearized) Jordan identity for each Jordan product;
\item[(ii)]
the linearization of identity \eqref{newJJ4} of Proposition \ref{propositionJJ4};
\item[(iii)]
the linearizations of these two new identities relating the Jordan products:
  \begin{align}
  \label{newJJ51}
  &\;
  \jorl \jorl a, b \jorr_\hor, \jorl a, \jorl a, a \jorr_\hor \jorr_\hor \jorr_\ver 
  - 
  \jorl \jorl a, a \jorr_\ver, \jorl b, \jorl a, a \jorr_\hor \jorr_\ver \jorr_\hor
  \equiv 0,
  \\
  \label{newJJ52}
  &\;
  \jorl \jorl a, a \jorr_\hor, \jorl b, \jorl a, a \jorr_\ver \jorr_\hor \jorr_\ver 
  - 
  \jorl \jorl a, b \jorr_\ver, \jorl a, \jorl a, a \jorr_\ver \jorr_\ver \jorr_\hor
  \equiv 0.
  \end{align}
\end{enumerate}
\end{proposition}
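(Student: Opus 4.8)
The plan is to repeat, in degree $5$ for the two Jordan products \eqref{2jordan}, the representation-theoretic computation already used in the proofs of Propositions \ref{propositionLJ6} and \ref{propositionJJ4}. First I would set up the expansion map $\widetilde E_5\colon \mathrm{JJ}(5) \to \mathrm{AA}(5)/\mathrm{I\,I}(5)$, whose kernel is the module $\mathrm{All}(5)$ of all multilinear identities satisfied by the Jordan products. Working one partition $\lambda$ of $5$ at a time, I would apply the projection $R_\lambda$ to the block expansion matrix \eqref{blockmatrix}, compute its RCF, and read off the matrix $\mathsf{all}(\lambda)$ whose row space is $\mathrm{All}_\lambda(5)$. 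Since $l\!j(5)=44$ and the partitions of $5$ have $d(\lambda)\in\{1,4,5,6,5,4,1\}$, each representation matrix has only $d(\lambda)$ rows and $44\,d(\lambda)$ columns, so the linear algebra is small and can be done over $\mathbb{Q}$.

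The essential point is to compute $\mathrm{Old}(5)$ correctly. Because commutativity of both products is already built into the JJ monomial basis, the old module is generated by the degree-$5$ consequences of three degree-$4$ identities: the linearized Jordan identity for $\hor$, the linearized Jordan identity for $\ver$, and the new identity \eqref{newJJ4} (genuinely new in degree $4$, but an old identity once we pass to degree $5$). I would linearize \eqref{newJJ4}, then for each of these three identities generate all degree-$5$ consequences by substituting $\hor$ and $\ver$ into each argument and into the outer product, as in \eqref{consequences}, normalizing every resulting JJ monomial. Projecting these generators by $R_\lambda$ and row-reducing yields $\mathsf{old}(\lambda)$ with row space $\mathrm{Old}_\lambda(5)$. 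Comparing $\mathrm{rank}\,\mathsf{all}(\lambda)$ with $\mathrm{rank}\,\mathsf{old}(\lambda)$ partition by partition determines $\mathrm{New}_\lambda(5)$; I expect the two ranks to agree for every $\lambda$ except $\lambda = 41$, where the new multiplicity should be $2$, matching the content $a^4b$ of \eqref{newJJ51} and \eqref{newJJ52}.

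To recover the explicit identities I would localize to $\lambda = 41$: for each row of $\mathsf{all}(\lambda)$ whose leading $1$ lies in a column outside the leading-column set of $\mathsf{old}(\lambda)$, I would record the JJ types $t$ in which the row is supported, replace the indeterminate positions in those types by the $\binom{5}{4,1}=5$ permutations of the multiset $a^4b$, and normalize using commutativity of both products together with power associativity. Solving the resulting small linear system for a combination that lies in $\ker \widetilde E_5$ gives compact nonlinear representatives, which I expect to coincide with \eqref{newJJ51} and \eqref{newJJ52}. A convenient internal check is that the involution interchanging $\hor$ and $\ver$ carries \eqref{newJJ51} (up to sign) onto \eqref{newJJ52}, so the two new identities form a single orbit under this symmetry and only one of them needs to be verified independently.

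The main obstacle is not the linear algebra, which is routine at this size, but the completeness of $\mathrm{Old}(5)$: I must be certain that \emph{every} degree-$5$ consequence of the degree-$\le 4$ identities, and in particular of \eqref{newJJ4}, has been included, since omitting a single consequence would inflate $\mathrm{New}_\lambda(5)$ and produce a spurious ``new'' identity. The careful bookkeeping of these consequences, and the final verification that \eqref{newJJ51}, \eqref{newJJ52} together with (i) and (ii) actually generate all of $\mathrm{All}(5)$ for each partition, is where the genuine content of the proof lies.
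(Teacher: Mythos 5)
Your proposal follows essentially the same route as the paper: the expansion matrix \eqref{blockmatrix} projected by $R_\lambda$ for each partition of $5$, comparison of $\mathrm{rank}\,\mathsf{all}(\lambda)$ with $\mathrm{rank}\,\mathsf{old}(\lambda)$ (the paper finds ranks $104$ versus $102$ for $\lambda=41$ and equality elsewhere), and recovery of compact nonlinear forms from the JJ types supporting the new rows applied to permutations of $a^4b$. Your observation that the $\hor\leftrightarrow\ver$ involution exchanges \eqref{newJJ51} and \eqref{newJJ52} (up to sign) is a correct and useful check not made explicit in the paper, but it does not change the argument.
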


\begin{proof}
In degree 5, our computations show that $\mathsf{all}(\lambda) = \mathsf{old}(\lambda)$ for all 
partitions except the second, $\lambda = 41$.
In this case, $\mathrm{rank}(\mathsf{old}(\lambda)) = 102$ and $\mathrm{rank}(\mathsf{all}(\lambda)) = 104$, 
so we expect two new independent identities which are equivalent (in characteristic 0 or $p > 5$)   
to nonlinear identities which are linear combinations of JJ monomials whose underlying variables 
are permutations of $a^4b$.
Rows 73 and 83 of $\mathsf{all}(\lambda)$ represent these new identities: these are the rows whose 
leading 1s occur in columns for which $\mathsf{old}(\lambda)$ has no leading 1.
Each of these rows of $\mathsf{all}(\lambda)$ has only two nonzero entries:
  \begin{equation}
  \label{4nonzero}
  \left\{ \;
  \begin{array}{l}
  \text{row 73}\colon \qquad \mathsf{all}(\lambda)_{73,119} = 1, \qquad \mathsf{all}(\lambda)_{73,155} = 2, 
  \\
  \text{row 83}\colon \qquad \mathsf{all}(\lambda)_{83,135} = 1, \qquad \mathsf{all}(\lambda)_{83,171} = \tfrac12.
  \end{array}
  \right.
  \end{equation}
In degree 5, there are 44 JJ types, and $\dim [\lambda] = 4$, so each row of $\mathsf{all}(\lambda)$ 
has 176 entries in 44 segments of length 4.
For $t = 1, \dots, 44$ columns $4t{-}3 \le j \le 4t$ form the segment corresponding to the representation 
matrix units \eqref{Sndecomp} with JJ type $t$.
The entries \eqref{4nonzero} occur in position 3 of segments 30, 39, 34, 43 respectively.

By the representation theory of $S_5$, the matrix unit in 
position $(1,3)$ of the $4 \times 4$ matrix for partition $\lambda = 41$ corresponds under the isomorphism
$R$ of equation \eqref{Sndecomp} to the following element of $\mathbb{Q} S_5$, 
in which all permutations $\sg$ are written in the form $a^\sg b^\sg c^\sg d^\sg e^\sg$:
  \begin{equation}
  \label{Snelement}
  \tfrac16 
  \big( \sum_{\sg \in S_4} a^\sg b^\sg c^\sg d^\sg e \big)
  \big( abcde - ebcda \big) \;
  abecd.
  \end{equation}
This is the result of applying $R^{-1}$ to the $4 \times 4$ matrix unit 
$E_{13}$ in the simple two-sided ideal for partition $\lambda = 41$.
Each entry \eqref{4nonzero} represents a scalar multiple of the corresponding JJ type applied to \eqref{Snelement}. 
Hence the two new multilinear identities each have 96 terms; commutativity of the Jordan products reduces this to 24.

We can find simpler nonlinear identities as follows.
We apply each of the JJ types 30, 39, 34, 43 to the 5 permutations of $a^4b$ and use Jordan commutativity
to get three nonlinear monomials for each JJ type.
We expand these 12 JJ monomials into the free associative interchange algebra and identify the 12 AA types 
which occur in the terms of the expansions.
We determine the union of the equivalence classes containing these 12 AA types.
(The equivalence relation on AA types is generated by the consequences of the interchange identity
\cite{BM}: it is the reflexive transitive closure of the relation defined by $t \sim t'$ if and only if 
there is a consequence of \eqref{interchange} whose two terms have AA types $t$ and $t'$.)
Only 8 of the 12 consequences of the interchange identity in degree 5 involve these 12 AA types.
The rest of the calculation is similar to the multilinear case.
\end{proof}

\begin{lemma} \label{lemmaJJ6}
For every partition $\lambda$ of 6, the multiplicity of the simple $S_6$-module $[\lambda]$ in 
the modules $\mathrm{All}(6)$, $\mathrm{Old}(6)$, $\mathrm{New}(6)$ appears in Figure \ref{JJ6mults}.
Summing the multiplicities gives 14 new irreducible multilinear identities in degree 6, 
relating the two Jordan products \eqref{2jordan} in the free associative interchange algebra, 
which are not consequences of the identities of lower degree (Proposition \ref{JJ5proposition}).
These new identities occur for the first 5 partitions.
\end{lemma}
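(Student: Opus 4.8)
The plan is to follow the representation-theoretic template already used for the multiplicity computations in Proposition \ref{propositionLJ6} and Lemmas \ref{lemmaLJ7} and \ref{lemmaLL7}, now specialized to the two Jordan products \eqref{2jordan}. Treating one partition $\lambda \vdash 6$ at a time is essential: the full expansion matrix $B_6$ of \eqref{blockmatrix} has blocks built from the $aa(5) = 394$ AA types and the $l\!j(6) = 164$ JJ types, each multiplied by $6! = 720$, so it is far too large to reduce directly. Applying the projection $R_\lambda$ of \eqref{Sndecomp} replaces it by matrices of size $d(\lambda) \times 164\,d(\lambda)$, which are tractable.

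First I would compute the multiplicity of $[\lambda]$ in $\mathrm{All}(6)$. For each $\lambda$ I expand every JJ monomial into the free AA algebra, reduce modulo the submodule $\mathrm{I\,I}(6)$ generated by the $98$ normalized consequences of \eqref{interchange}, apply $R_\lambda$ to the resulting coefficient vectors, and read off the isotypic component $\mathrm{All}_\lambda(6)$ of $\mathrm{All}(6) = \ker \widetilde E_6$ from the reduced row-echelon form, exactly as in the block construction \eqref{blockmatrix}. The ranks give the $\mathrm{all}$ row of Figure \ref{JJ6mults}.

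Second I would compute the multiplicity of $[\lambda]$ in $\mathrm{Old}(6)$. Here I assemble the degree-$6$ consequences of the identities of lower degree: commutativity and the linearized Jordan identity for each product, the new degree-$4$ identity \eqref{newJJ4} of Proposition \ref{propositionJJ4}, and the two new degree-$5$ identities \eqref{newJJ51} and \eqref{newJJ52} of Proposition \ref{JJ5proposition}. Each multilinear identity of degree $d$ produces $d{+}1$ consequences in degree $d{+}1$ for each of the two symmetric products (the last two lines of \eqref{consequences} coinciding up to sign), and iterating lifts them all to degree $6$; I then project by $R_\lambda$ and take the rank to obtain $\mathrm{Old}_\lambda(6)$. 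Since every consequence of an identity is again an identity, $\mathrm{Old}(6) \subseteq \mathrm{All}(6)$ holds automatically, so I only verify containment of row spaces and subtract, using $\mathrm{New}(6) \cong \mathrm{All}(6)/\mathrm{Old}(6)$ to fill in the $\mathrm{new}$ row of Figure \ref{JJ6mults}.

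The main obstacle is arithmetic scale rather than any conceptual difficulty: the Jordan products retain more surviving monomials than the Lie brackets, so the matrices here are larger than in the Lie-Lie case, and exact rank computation over $\mathbb{Q}$ may be impractical for the partitions of large dimension. To control this I would compute over a finite field $\mathbb{F}_p$ with $p > 6$, so that $\mathbb{F}_p S_6$ remains semisimple and the structure constants $\{ 0, \pm 1 \}$ of the group algebra agree with characteristic $0$; repeating the rank computation for a second prime guards against a spurious rank drop. Once the table is complete I confirm that $\mathrm{New}_\lambda(6) \neq \{ 0 \}$ exactly for the first five partitions $\lambda \in \{ 6, 51, 42, 41^2, 33 \}$ and that the corresponding multiplicities sum to $14$, which is the assertion of the lemma.
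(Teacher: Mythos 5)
Your proposal is correct and follows essentially the same route as the paper: the authors omit an explicit proof of this lemma precisely because it repeats the computations of Proposition \ref{propositionLJ6} and Lemma \ref{lemmaLJ7} --- for each $\lambda \vdash 6$, build the expansion matrix \eqref{blockmatrix} projected by $R_\lambda$, extract $\mathrm{All}_\lambda(6)$ from the RCF, compare with the RCF of the matrix of degree-$6$ consequences of commutativity, the linearized Jordan identities, \eqref{newJJ4}, \eqref{newJJ51}, and \eqref{newJJ52}, and read off the multiplicities of Figure \ref{JJ6mults}. Your list of lower-degree inputs, the count of $d{+}1$ consequences per symmetric product, and the use of modular arithmetic with $p>6$ all match the paper's stated methodology.
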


\begin{figure}[ht]
\small
\[
\boxed{
\begin{array}{c|ccccccccccc}
\lambda &\; 
6 &\;  51 &\;  42 &\;  41^2 &\;  3^2  &\; 321 &\;  31^3 &\;  2^3 &\;  2^21^2 &\;  21^4 &\;  1^6
\\
\mathrm{all} &\;
75 &\; 541 &\; 1031 &\; 1286 &\; 615 &\; 2088 &\; 1446 &\; 653 &\; 1302 &\; 788 &\; 164
\\
\mathrm{old} &\;
73 &\; 536 &\; 1027 &\; 1285 &\; 613 &\; 2088 &\; 1446 &\; 653 &\; 1302 &\; 788 &\; 164
\\
\mathrm{new} &\;
2 &\; 5 &\; 4 &\; 1 &\; 2 &\; 0 &\; 0 &\; 0 &\; 0 &\; 0 &\; 0
\end{array}
}
\]
\caption{Multiplicities of new Jordan-Jordan identities in degree 6}
\label{JJ6mults}
\end{figure}

\begin{figure}[ht]
\small
\[
\boxed{
\begin{array}{ll} 
6 &
   \jorl \jorl a a \jorr_\hor \jorl a \jorl a \jorl a a \jorr_\ver \jorr_\hor \jorr_\ver \jorr_\ver  %   1 
 - \jorl \jorl a \jorl a a \jorr_\hor \jorr_\ver \jorl a \jorl a a \jorr_\ver \jorr_\hor \jorr_\ver  %   2 
\\
\midrule
&
   \jorl \jorl a a \jorr_\ver \jorl a \jorl a \jorl a a \jorr_\hor \jorr_\ver \jorr_\hor \jorr_\hor  %   1 
 - \jorl \jorl a \jorl a a \jorr_\hor \jorr_\ver \jorl a \jorl a a \jorr_\ver \jorr_\hor \jorr_\hor  %   2 
\\
\midrule
51 &
   \jorl \jorl a a \jorr_\hor \jorl b \jorl a \jorl a a \jorr_\ver \jorr_\hor \jorr_\ver \jorr_\ver  %   1 
 - \jorl \jorl b \jorl a a \jorr_\hor \jorr_\ver \jorl a \jorl a a \jorr_\ver \jorr_\hor \jorr_\ver  %   2 
\\
\midrule
&
   \jorl \jorl a a \jorr_\ver \jorl b \jorl a \jorl a a \jorr_\hor \jorr_\ver \jorr_\hor \jorr_\hor  %   1 
 - \jorl \jorl a \jorl a a \jorr_\hor \jorr_\ver \jorl b \jorl a a \jorr_\ver \jorr_\hor \jorr_\hor  %   2 
\\
\midrule
&
   \jorl \jorl a \jorl a a \jorr_\hor \jorr_\hor \jorl b \jorl a a \jorr_\ver \jorr_\hor \jorr_\ver  %   1 
 - \jorl \jorl b \jorl a a \jorr_\hor \jorr_\ver \jorl a \jorl a a \jorr_\ver \jorr_\ver \jorr_\hor  %   2 
\\
\midrule
&
   \jorl \jorl a a \jorr_\ver \jorl a \jorl b \jorl a a \jorr_\hor \jorr_\ver \jorr_\ver \jorr_\hor  %   1 
 - \jorl \jorl a a \jorr_\ver \jorl b \jorl a \jorl a a \jorr_\hor \jorr_\ver \jorr_\ver \jorr_\hor  %   2 
 + \jorl \jorl a \jorl a a \jorr_\hor \jorr_\hor \jorl a \jorl a b \jorr_\ver \jorr_\hor \jorr_\ver  %   3 
\\ 
&
-2 \jorl \jorl a \jorl a a \jorr_\hor \jorr_\ver \jorl a \jorl a b \jorr_\ver \jorr_\ver \jorr_\hor  %   4 
 + \jorl \jorl a \jorl a a \jorr_\hor \jorr_\ver \jorl b \jorl a a \jorr_\ver \jorr_\ver \jorr_\hor  %   5 
\\
\midrule
&
 2 \jorl \jorl a a \jorr_\ver \jorl a \jorl b \jorl a a \jorr_\ver \jorr_\hor \jorr_\ver \jorr_\hor  %   1 
+2 \jorl \jorl a a \jorr_\ver \jorl b \jorl a \jorl a a \jorr_\ver \jorr_\hor \jorr_\ver \jorr_\hor  %   2 
-4 \jorl \jorl a b \jorr_\ver \jorl a \jorl a \jorl a a \jorr_\ver \jorr_\hor \jorr_\ver \jorr_\hor  %   3 
\\ 
&
-2 \jorl \jorl a \jorl a b \jorr_\hor \jorr_\hor \jorl a \jorl a a \jorr_\ver \jorr_\hor \jorr_\ver  %   4 
+3 \jorl \jorl b \jorl a a \jorr_\hor \jorr_\hor \jorl a \jorl a a \jorr_\ver \jorr_\hor \jorr_\ver  %   5 
-2 \jorl \jorl a \jorl a a \jorr_\hor \jorr_\ver \jorl b \jorl a a \jorr_\ver \jorr_\ver \jorr_\hor  %   6 
\\ 
&
 + \jorl \jorl b \jorl a a \jorr_\hor \jorr_\ver \jorl a \jorl a a \jorr_\ver \jorr_\ver \jorr_\hor  %   7 
\\
\midrule
42
&
 2 \jorl \jorl a a \jorr_\hor \jorl \jorl a b \jorr_\hor \jorl a b \jorr_\hor \jorr_\ver \jorr_\ver  %   1 
 - \jorl \jorl a \jorl a a \jorr_\ver \jorr_\ver \jorl a \jorl b b \jorr_\ver \jorr_\ver \jorr_\hor  %   2 
-2 \jorl \jorl a \jorl a b \jorr_\ver \jorr_\ver \jorl b \jorl a a \jorr_\ver \jorr_\ver \jorr_\hor  %   3 
\\ 
&
 + \jorl \jorl b \jorl a a \jorr_\ver \jorr_\ver \jorl b \jorl a a \jorr_\ver \jorr_\ver \jorr_\hor  %   4 
\\
\midrule
&
   \jorl \jorl b b \jorr_\ver \jorl \jorl a a \jorr_\hor \jorl a a \jorr_\ver \jorr_\ver \jorr_\hor  %   1 
+2 \jorl \jorl a \jorl a b \jorr_\hor \jorr_\hor \jorl b \jorl a a \jorr_\ver \jorr_\hor \jorr_\ver  %   2 
 - \jorl \jorl b \jorl a a \jorr_\hor \jorr_\hor \jorl b \jorl a a \jorr_\ver \jorr_\hor \jorr_\ver  %   3 
\\ 
&
-2 \jorl \jorl b \jorl a b \jorr_\hor \jorr_\ver \jorl a \jorl a a \jorr_\ver \jorr_\ver \jorr_\hor  %   4 
\\
\midrule
&
 2 \jorl \jorl a a \jorr_\ver \jorl \jorl a b \jorr_\ver \jorl a b \jorr_\ver \jorr_\hor \jorr_\hor  %   1 
 - \jorl \jorl a \jorl a a \jorr_\hor \jorr_\hor \jorl a \jorl b b \jorr_\hor \jorr_\hor \jorr_\ver  %   2 
-2 \jorl \jorl a \jorl a b \jorr_\hor \jorr_\hor \jorl b \jorl a a \jorr_\hor \jorr_\hor \jorr_\ver  %   3 
\\ 
&
 + \jorl \jorl b \jorl a a \jorr_\hor \jorr_\hor \jorl b \jorl a a \jorr_\hor \jorr_\hor \jorr_\ver  %   4 
\\
\midrule
&
 2 \jorl \jorl a a \jorr_\ver \jorl a \jorl b \jorl a b \jorr_\hor \jorr_\ver \jorr_\ver \jorr_\hor  %   1 
-2 \jorl \jorl a a \jorr_\ver \jorl b \jorl a \jorl a b \jorr_\hor \jorr_\ver \jorr_\ver \jorr_\hor  %   2 
+2 \jorl \jorl a a \jorr_\ver \jorl b \jorl b \jorl a a \jorr_\ver \jorr_\hor \jorr_\ver \jorr_\hor  %   3 
\\ 
&
-2 \jorl \jorl b b \jorr_\ver \jorl a \jorl a \jorl a a \jorr_\ver \jorr_\hor \jorr_\ver \jorr_\hor  %   4 
-2 \jorl \jorl a a \jorr_\ver \jorl \jorl a b \jorr_\hor \jorl a b \jorr_\ver \jorr_\ver \jorr_\hor  %   5 
 + \jorl \jorl b b \jorr_\ver \jorl \jorl a a \jorr_\hor \jorl a a \jorr_\ver \jorr_\ver \jorr_\hor  %   6 
\\ 
&
 - \jorl \jorl a \jorl a a \jorr_\hor \jorr_\ver \jorl a \jorl b b \jorr_\ver \jorr_\ver \jorr_\hor  %   7 
+2 \jorl \jorl a \jorl a a \jorr_\hor \jorr_\ver \jorl b \jorl a b \jorr_\ver \jorr_\ver \jorr_\hor  %   8 
\\
\midrule
41^2
&
\sum_{\sg \in S_2} 
\epsilon(\sigma)
\big[
   \jorl \jorl a a \jorr_\hor \jorl b^\sigma \jorl c^\sigma \jorl a a \jorr_\ver \jorr_\hor \jorr_\hor \jorr_\ver  %   1 
 + \jorl \jorl a a \jorr_\hor \jorl b^\sigma \jorl a \jorl a c^\sigma \jorr_\ver \jorr_\hor \jorr_\hor \jorr_\ver  %   2 
\\ 
&
 + \jorl \jorl a a \jorr_\ver \jorl b^\sigma \jorl c^\sigma \jorl a a \jorr_\hor \jorr_\ver \jorr_\ver \jorr_\hor  %   3 
 + \jorl \jorl a a \jorr_\ver \jorl b^\sigma \jorl a \jorl a c^\sigma \jorr_\hor \jorr_\ver \jorr_\ver \jorr_\hor  %   4 
\\ 
&
 - \jorl \jorl a a \jorr_\ver \jorl a \jorl b^\sigma \jorl a c^\sigma \jorr_\hor \jorr_\ver \jorr_\ver \jorr_\hor  %   5 
 - \jorl \jorl a a \jorr_\ver \jorl a \jorl b^\sigma \jorl a c^\sigma \jorr_\ver \jorr_\hor \jorr_\ver \jorr_\hor  %   6 
\\ 
&
-2 \jorl \jorl b^\sigma \jorl a c^\sigma \jorr_\hor \jorr_\hor \jorl a \jorl a a \jorr_\ver \jorr_\hor \jorr_\ver  %   7 
+2 \jorl \jorl a \jorl a b^\sigma \jorr_\hor \jorr_\hor \jorl a \jorl a c^\sigma \jorr_\ver \jorr_\hor \jorr_\ver  %   8 
\\ 
&
-2 \jorl \jorl a \jorl a b^\sigma \jorr_\hor \jorr_\ver \jorl a \jorl a c^\sigma \jorr_\ver \jorr_\ver \jorr_\hor  %   9 
-2 \jorl \jorl a \jorl a a \jorr_\hor \jorr_\ver \jorl b^\sigma \jorl a c^\sigma \jorr_\ver \jorr_\ver \jorr_\hor  %  10 
\big]
\\
\midrule
3^2
&
 2 \jorl \jorl a b \jorr_\ver \jorl \jorl b b \jorr_\hor \jorl a a \jorr_\ver \jorr_\ver \jorr_\hor  %   1 
-2 \jorl \jorl a \jorl b b \jorr_\hor \jorr_\hor \jorl b \jorl a a \jorr_\ver \jorr_\hor \jorr_\ver  %   2 
 - \jorl \jorl b \jorl b b \jorr_\hor \jorr_\hor \jorl a \jorl a a \jorr_\ver \jorr_\hor \jorr_\ver  %   3 
\\ 
&
 + \jorl \jorl b \jorl b b \jorr_\hor \jorr_\ver \jorl a \jorl a a \jorr_\ver \jorr_\ver \jorr_\hor  %   4 
\\
\midrule
&
   \jorl \jorl b b \jorr_\ver \jorl a \jorl b \jorl a a \jorr_\hor \jorr_\ver \jorr_\ver \jorr_\hor  %   1 
 - \jorl \jorl b b \jorr_\ver \jorl b \jorl a \jorl a a \jorr_\hor \jorr_\ver \jorr_\ver \jorr_\hor  %   2 
-2 \jorl \jorl a b \jorr_\ver \jorl b \jorl b \jorl a a \jorr_\ver \jorr_\hor \jorr_\ver \jorr_\hor  %   3 
\\ 
&
 + \jorl \jorl b b \jorr_\ver \jorl a \jorl b \jorl a a \jorr_\ver \jorr_\hor \jorr_\ver \jorr_\hor  %   4 
 + \jorl \jorl b b \jorr_\ver \jorl b \jorl a \jorl a a \jorr_\ver \jorr_\hor \jorr_\ver \jorr_\hor  %   5 
 - \jorl \jorl a b \jorr_\ver \jorl \jorl b b \jorr_\hor \jorl a a \jorr_\ver \jorr_\ver \jorr_\hor  %   6 
\\ 
&
 - \jorl \jorl b b \jorr_\ver \jorl \jorl a a \jorr_\hor \jorl a b \jorr_\ver \jorr_\ver \jorr_\hor  %   7 
 - \jorl \jorl b b \jorr_\ver \jorl \jorl a b \jorr_\hor \jorl a a \jorr_\ver \jorr_\ver \jorr_\hor  %   8 
 + \jorl \jorl a \jorl b b \jorr_\hor \jorr_\ver \jorl b \jorl a a \jorr_\ver \jorr_\ver \jorr_\hor  %   9 
\\ 
&
+2 \jorl \jorl b \jorl a b \jorr_\hor \jorr_\ver \jorl b \jorl a a \jorr_\ver \jorr_\ver \jorr_\hor  %  10 
\end{array}
}
\]
\caption{14 new nonlinear Jordan-Jordan identities in degree 6}
\label{JJ6table}
\end{figure}

\begin{theorem} \label{JJ6theorem}
Every multilinear identity in degree 6 satisfied by the Jordan products \eqref{2jordan} in the free 
associative interchange algebra is a consequence of the following identities:
\begin{enumerate}
\item[(i)]
commutativity and the (linearized) Jordan identity for each Jordan product;
\item[(ii)]
the linearization of identity \eqref{newJJ4} of Proposition \ref{propositionJJ4};
\item[(iii)]
the linearization of identities \eqref{newJJ51} and \eqref{newJJ52} of Lemma \ref{JJ5proposition};
\item[(iv)]
the linearizations of the identities of Figure \ref{JJ6table} which are compact nonlinear forms of 
the 14 new irreducible identities identified by Lemma \ref{lemmaJJ6}.
\end{enumerate}
\end{theorem}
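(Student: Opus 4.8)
The plan is to proceed partition by partition using the representation theory of $S_6$, exactly as in the proofs of Proposition \ref{propositionLJ6} and Theorem \ref{LJ7theorem}. Lemma \ref{lemmaJJ6} has already located all the new identities: for each partition $\lambda$ of $6$ it records the multiplicity of $[\lambda]$ in $\mathrm{All}(6)$, $\mathrm{Old}(6)$ and $\mathrm{New}(6)$, and these multiplicities vanish outside the first five partitions $6$, $51$, $42$, $41^2$, $3^2$ (with values $2$, $5$, $4$, $1$, $2$). For every other $\lambda$ the matrices $\mathsf{all}(\lambda)$ and $\mathsf{old}(\lambda)$ have equal row spaces, so the consequences of commutativity and the Jordan identity (item (i)) together with the linearized lower-degree identities \eqref{newJJ4}, \eqref{newJJ51}, \eqref{newJJ52} (items (ii)--(iii)) already account for $\mathrm{All}_\lambda(6)$, and there is nothing further to adjoin.

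For the five partitions with $\mathrm{New}_\lambda(6) \neq \{0\}$ the work is constructive. First I would isolate the new rows of $\mathsf{all}(\lambda)$ --- those whose leading $1$ lies in a column of $\mathcal{A}(\lambda) \setminus \mathcal{O}(\lambda)$ --- and, for each such row, record the set $T$ of JJ types that carry a nonzero entry. As in the Lie--Jordan case, $T$ is small, so I can replace the positions in each type of $T$ by the permutations of the multiset prescribed by $\lambda$ (namely $a^6$, $a^5 b$, $a^4 b^2$, $a^4 bc$, $a^3 b^3$ for the five partitions) and normalize using commutativity of the two Jordan products. For $\lambda = 41^2$, whose Young diagram has a tail of length two, representation theory lets me assume the identity is alternating in the last two indeterminates, which is why that identity appears as an alternating sum over $S_2$. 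Collecting terms then produces the fourteen compact nonlinear identities displayed in Figure \ref{JJ6table}.

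The verification has two halves. First, each of the fourteen nonlinear identities must genuinely lie in $\ker \widetilde E_6$: I would expand its monomials into $\mathrm{AA}(6)$, reduce modulo the submodule $\mathrm{I\,I}(6)$ of interchange consequences, and confirm that the result is zero. Since each identity involves only a few JJ types, only the equivalence classes of AA types meeting those types are relevant, which keeps this reduction small. Second, I would show that the linearizations of these identities, adjoined to the generators of $\mathrm{Old}(6)$, span $\mathrm{All}_\lambda(6)$ for each of the five partitions: for each such $\lambda$ I would assemble the representation matrix $R_\lambda$ of the enlarged generating set and check that its reduced row echelon form equals $\mathsf{all}(\lambda)$. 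Because the fourteen identities supply exactly the multiplicities $2$, $5$, $4$, $1$, $2$ counted by Lemma \ref{lemmaJJ6}, equality of these row spaces confirms that items (i)--(iv) generate all of $\mathrm{All}(6)$.

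The main obstacle is computational scale rather than conceptual difficulty. Among the relevant partitions the dimensions $d(\lambda)$ are largest for $\lambda = 41^2$ and $\lambda = 42$, and combining $d(\lambda)$ with the $44$ JJ types yields wide matrices whose rational reduced row echelon forms are costly; I would therefore carry out the rank verifications over a finite field $\mathbb{F}_p$ with $p > 6$, where complete reducibility guarantees that the structure constants agree with those in characteristic $0$. A secondary, purely organizational, difficulty is the reduction of the multilinear new rows to publishable nonlinear form: this requires careful bookkeeping of which permutations of each multiset survive after applying Jordan commutativity, and of the equivalence classes of AA types under the interchange consequences used in passing to the quotient $\mathrm{AA}(6)/\mathrm{I\,I}(6)$.
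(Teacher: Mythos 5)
Your proposal is correct and follows essentially the same route as the paper, which omits an explicit proof of this theorem precisely because it rests on the computations already described for Proposition \ref{propositionLJ6}, Theorem \ref{LJ7theorem}, and Propositions \ref{propositionJJ4}--\ref{JJ5proposition}: representation-theoretic comparison of $\mathsf{all}(\lambda)$ and $\mathsf{old}(\lambda)$ partition by partition, extraction of the new rows, and conversion to compact nonlinear form via the multiset substitution dictated by $\lambda$. Your added remarks on verifying membership in $\ker \widetilde E_6$ and on the alternating structure for $\lambda = 41^2$ match the paper's stated methodology.
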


\begin{lemma} \label{JJdegree7}
For every partition $\lambda$ of 7, the multiplicity of the simple $S_7$-module $[\lambda]$ in 
the modules $\mathrm{All}(7)$, $\mathrm{Old}(7)$, $\mathrm{New}(7)$ appears in Figure \ref{JJ7mults}.
Summing the multiplicities gives 94 new irreducible multilinear identities in degree 7,
relating the two Jordan products in the free associative interchange algebra, which are not 
consequences of the identities of lower degree (Theorem \ref{JJ6theorem}).
These new identities occur for partitions 1--6 and 8.
\end{lemma}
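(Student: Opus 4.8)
The plan is to follow, partition by partition, the same representation-theoretic procedure used in the proofs of Lemma \ref{lemmaLJ7} and Lemma \ref{lemmaJJ6}, now applied to the expansion map $\widetilde E_7$ built from the two Jordan products \eqref{2jordan}. For each of the fifteen partitions $\lambda$ of $7$, the first task is to compute the multiplicity of $[\lambda]$ in $\mathrm{All}(7) = \ker \widetilde E_7$. Following the construction of the block matrix \eqref{blockmatrix}, I would apply the projection $R_\lambda$ to each of the $l\!j(7) = 616$ JJ components, assemble the corresponding representation matrix of the Jordan expansions together with the representation matrix of the interchange consequences, compute its RCF, and read off $\mathrm{All}_\lambda(7)$ as the rank of the lower-right block whose leading $1$s lie in the JJ-monomial columns. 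This reproduces the $\mathrm{all}$ entries of Figure \ref{JJ7mults}.

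The second task is to compute $\mathrm{Old}_\lambda(7)$. By Theorem \ref{JJ6theorem}, every identity of degree $\le 6$ is a consequence of commutativity and the Jordan identity for each product together with the new identities \eqref{newJJ4}, \eqref{newJJ51}, \eqref{newJJ52}, and the fourteen identities of Figure \ref{JJ6table}. I would lift this complete generating set to degree $7$ by iterated substitution (a symmetric identity of degree $k$ yielding $k+1$ consequences for each product at each step, as noted after \eqref{consequences}) and apply $R_\lambda$ to obtain a spanning set for $\mathrm{Old}_\lambda(7)$; its multiplicity is again a rank. Since $\mathrm{Old}(7) \subseteq \mathrm{All}(7)$ and $\mathrm{New}(7) \cong \mathrm{All}(7)/\mathrm{Old}(7)$ as $S_7$-modules, the multiplicity of $[\lambda]$ in $\mathrm{New}(7)$ is the difference of the two computed ranks, giving the $\mathrm{new}$ entries of the figure. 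Summing these differences over all $\lambda$ yields the total of $94$, and the nonzero entries single out partitions $1$--$6$ and $8$ as the ones carrying new identities.

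The hard part is purely computational: in degree $7$ there are $aa(6) = 1806$ AA types and $l\!j(7) = 616$ JJ types, so the full expansion matrix is far beyond what can be reduced directly over $\mathbb{Q}$, and even the individual representation matrices $R_\lambda$, of size $d(\lambda) \times 616\, d(\lambda)$, are large for the middle partitions. The representation-theoretic splitting is what makes the problem feasible, but to control the arithmetic I would perform all rank computations over a finite field $\mathbb{F}_p$ with $p > 7$; since $\mathbb{F}_p S_7$ remains semisimple and the group-algebra structure constants stay in $\{0, \pm 1\}$, the resulting multiplicities agree with those in characteristic $0$. I would further exploit that every interchange consequence can be written as a single difference $\iota(t) - \pi(t')$, which minimizes the number of rows contributing to the $\Xi$ block, and normalize each JJ monomial by commutativity before forming the matrices. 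The organizational details of carrying out and cross-checking this computation, particularly for the partitions with the largest matrices, are deferred to Section \ref{sectionJJ7proof}.
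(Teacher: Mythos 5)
Your proposal is correct and follows essentially the same route as the paper: the authors compute $\mathrm{All}_\lambda(7)$ and $\mathrm{Old}_\lambda(7)$ as ranks of representation matrices built from the expansion map and the degree-$7$ consequences of the Theorem \ref{JJ6theorem} generators, take the difference to get $\mathrm{New}_\lambda(7)$, and work modulo primes $p>7$ exactly as you describe (the paper omits a formal proof by appeal to its earlier remark, with the computational specifics given in Section \ref{sectionJJ7proof}).
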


\begin{figure}[ht]
\small
\[
\boxed{
\begin{array}{c}
\begin{array}{c|ccccccc}
\lambda &\;\quad 
7 &\;\quad  61 &\;\quad  52 &\;\quad  51^2 &\;\quad  43 &\;\quad  421 &\;\quad  41^3 
\\
\mathrm{all} &\;\quad
349 &\;\quad 2618 &\;\quad 6510 &\;\quad 7444 &\;\quad 6740 &\;\quad 17752 &\;\quad 10842 
\\
\mathrm{old} &\;\quad
347 &\;\quad 2609 &\;\quad 6493 &\;\quad 7424 &\;\quad 6725 &\;\quad 17732 &\;\quad 10842 
\\
\mathrm{new} &\;\quad
2 &\;\quad 9 &\;\quad 17 &\;\quad 20 &\;\quad 15 &\;\quad 20 &\;\quad 0 
\end{array}
\\
\midrule
\begin{array}{cccccccc}
3^21 &\;\quad  32^2 &\;\quad  321^2 &\;\quad  31^4 &\;\quad  2^31 &\;\quad  2^21^3 &\;\quad  21^5 &\;\quad  1^7
\\
10839 &\;\quad 10922 &\;\quad 19088 &\;\quad 8692 &\;\quad 7666 &\;\quad 8116 &\;\quad 3632 &\;\quad 616
\\
10828 &\;\quad 10922 &\;\quad 19088 &\;\quad 8692 &\;\quad 7666 &\;\quad 8116 &\;\quad 3632 &\;\quad 616
\\
11 &\;\quad 0 &\;\quad 0 &\;\quad 0 &\;\quad 0 &\;\quad 0 &\;\quad 0 &\;\quad 0
\end{array}
\end{array}
}
\]
\caption{Multiplicities of new Jordan-Jordan identities in degree 7}
\label{JJ7mults}
\end{figure}

\begin{theorem} \label{JJ7theorem}
Every multilinear identity in degree 7 satisfied by the two Jordan products \eqref{2jordan} 
in the free associative interchange algebra is a consequence of:
\begin{enumerate}
\item[(i)]
commutativity and the (linearized) Jordan identity for each Jordan product;
\item[(ii)]
the linearization of identity \eqref{newJJ4} of Proposition \ref{propositionJJ4};
\item[(iii)]
the linearization of identities \eqref{newJJ51} and \eqref{newJJ52} of Proposition \ref{JJ5proposition};
\item[(iv)]
the linearizations of the identities of Figure \ref{JJ6table} from Theorem \ref{JJ6theorem};
\item[(v)]
the linearizations of the identities of Figure \ref{JJnew7firstidentities} which are nonlinear 
forms of 7 of the 94 new irreducible identities established by Lemma \ref{JJdegree7} (we display
only the simplest identity for each partition 1--6 and 8);
\item[(vi)]
the linearizations of the remaining 89 nonlinear identities which are available in an ancillary file 
for the \emph{\texttt{arXiv}} version of this paper; see \cite{BMarXiv} for details.
\end{enumerate}
\end{theorem}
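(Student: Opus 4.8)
The plan is to argue exactly as in the proofs of Proposition~\ref{propositionLJ6}, Lemma~\ref{lemmaLJ7}, and Theorem~\ref{LJ7theorem}, using Lemma~\ref{JJdegree7} to locate the new identities and then exhibiting an explicit generating set one partition at a time. By Lemma~\ref{JJdegree7} and the multiplicity data of Figure~\ref{JJ7mults}, we already know that $\mathrm{New}_\lambda(7) \ne \{0\}$ precisely for the seven partitions $\lambda \in \{ 7, 61, 52, 51^2, 43, 421, 3^21 \}$, with new multiplicities $2, 9, 17, 20, 15, 20, 11$ summing to $94$. For every remaining partition of $7$ we have $\mathrm{All}_\lambda(7) = \mathrm{Old}_\lambda(7)$, so items (i)--(iv) already span the full isotypic component and nothing new is required there.

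First I would, for each of the seven partitions $\lambda$ carrying new identities, compute the RCF matrices $\mathsf{all}(\lambda)$ and $\mathsf{old}(\lambda)$ whose row spaces are $\mathrm{All}_\lambda(7)$ and $\mathrm{Old}_\lambda(7)$, obtained from the expansion matrix~\eqref{blockmatrix} and from the consequences of the commutativity and Jordan identities respectively. Each row of $\mathsf{all}(\lambda)$ whose leading $1$ lies in a column belonging to $\mathcal{A}(\lambda)\setminus\mathcal{O}(\lambda)$ (the column-index sets of the leading $1$s of $\mathsf{all}(\lambda)$ and $\mathsf{old}(\lambda)$, as in the proof of Theorem~\ref{LJ7theorem}) represents a new identity. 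As in the Lie--Jordan case, for such a row the set $T$ of JJ types carrying a nonzero entry is usually small, which lets me reconstruct a genuine nonlinear identity without applying the inverse isomorphism of~\eqref{Sndecomp}: for each type in $T$ I substitute all permutations of the multiset prescribed by $\lambda$ and normalize each JJ monomial using commutativity of the two Jordan products, abbreviating repeated factors by power associativity as in Figure~\ref{JJ6table}. For the one partition with a tail of length $\ge 2$, namely $51^2$, I would additionally assume the identity is alternating in the trailing indeterminates.

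The decisive step is to verify that the nonlinear identities produced this way---the simplest of which, one per partition, are printed in Figure~\ref{JJnew7firstidentities}, the remainder being recorded in the ancillary file~\cite{BMarXiv}---together with $\mathrm{Old}(7)$, generate all of $\mathrm{All}(7)$ as an $S_7$-module. Concretely, for each of the seven partitions I apply the projection $R_\lambda$ of~\eqref{Sndecomp} to the proposed generators, stack the resulting representation matrices on top of $\mathsf{old}(\lambda)$, and confirm that the row space of the combined matrix coincides with that of $\mathsf{all}(\lambda)$; when this holds for all seven partitions, the listed identities account for every isotypic component of $\mathrm{All}(7)$, which is exactly the assertion of the theorem.

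The hard part is computational scale rather than conceptual. In degree $7$ the ambient module $\mathrm{AA}(7)$ has dimension $aa(6)\cdot 7! = 1806 \cdot 5040$, so the expansion matrix~\eqref{blockmatrix} cannot be formed directly and the entire argument must be carried out partition by partition through the representation matrices $R_\lambda$, typically over a prime field $\mathbb{F}_p$ with $p > 7$ to control the arithmetic. The new multiplicities are large enough that distilling the irreducible new identities into compact, publishable nonlinear forms---and certifying that the seven displayed in Figure~\ref{JJnew7firstidentities} together with those in the ancillary file truly recover the full modules $\mathrm{New}_\lambda(7)$---is delicate; the detailed bookkeeping for this reduction and for the generation check is precisely what is carried out in Section~\ref{sectionJJ7proof}.
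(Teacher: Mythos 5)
Your proposal is correct and follows essentially the same route as the paper: Lemma \ref{JJdegree7} locates the seven partitions with new identities, the nonlinear forms are recovered by restricting to the small set of JJ types appearing in the new rows and substituting permutations of the multiset for $\lambda$, and generation is certified by re-projecting with $R_\lambda$ and comparing against $\mathsf{all}(\lambda)$. The paper's own proof is just as brief and defers the same computational details (modular arithmetic, the nonlinear expansion matrix over the AA/JJ subtypes, and rational reconstruction) to Section \ref{sectionJJ7proof}.
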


\begin{proof}
From Lemma \ref{JJdegree7} we see that only seven partitions of 7 provide new JJ identities.
For each of these partitions $\lambda$, Figure \ref{JJnew7info} contains:
  \begin{itemize}
  \item
  The number of new independent identities for partition $\lambda$ (from Figure \ref{JJ7mults}).
  \item
  The number of nonzero entries in the row of the representation matrix corresponding to each new identity.
  \end{itemize}
In order to find explicit new nonlinear identities in degree 7, we use the computational methods 
introduced for one Lie bracket and one Jordan product: see the proofs of Theorem \ref{LJ7theorem} 
and Proposition \ref{JJ5proposition}.
The remaining details of this proof require further explanation of our computational methods and 
are presented in section \ref{sectionJJ7proof}.
\end{proof}

  \begin{figure}[ht]
  \small
  \[
  \boxed{
  \begin{array}{ll}
  7 & 
     \jorl \liel a a \lier \jorl a \liel \liel a a \lier \jorl a a \jorr \lier \jorr \jorr 
  +  \jorl \liel a \liel a a \lier \lier \jorl a \liel a \jorl a a \jorr \lier \jorr \jorr 
  -  \jorl \jorl a \liel a a \lier \jorr \liel \liel a a \lier \jorl a a \jorr \lier \jorr 
  -  \jorl \liel a \jorl a a \jorr \lier \jorl a \liel a \liel a a \lier \lier \jorr \jorr 
  \\ \midrule
  61 &
     \jorl \liel a a \lier \jorl b \liel \liel a a \lier \jorl a a \jorr \lier \jorr \jorr 
  +  \jorl \liel a \liel a a \lier \lier \jorl b \liel a \jorl a a \jorr \lier \jorr \jorr 
  -  \jorl \jorl b \liel a a \lier \jorr \liel \liel a a \lier \jorl a a \jorr \lier \jorr 
  -  \jorl \liel a \jorl a a \jorr \lier \jorl b \liel a \liel a a \lier \lier \jorr \jorr 
  \\ \midrule
  52 &
     \liel \jorl a a \jorr \liel b \jorl b \liel a \liel a a \lier \lier \jorr \lier \lier 
  -  \liel \jorl a b \jorr \liel b \liel \jorl a a \jorr \jorl a a \jorr \lier \lier \lier 
  -2 \liel \jorl a \liel a a \lier \jorr \liel b \jorl b \liel a a \lier \jorr \lier \lier 
  +2 \liel \jorl b \liel a a \lier \jorr \liel b \jorl a \liel a a \lier \jorr \lier \lier 
  \\
  &
  -  \liel \liel b \jorl a a \jorr \lier \jorl b \liel a \liel a a \lier \lier \jorr \lier 
  +  \liel \liel b \jorl a b \jorr \lier \liel \jorl a a \jorr \jorl a a \jorr \lier \lier 
  \\ \midrule
  51^2 &
     \liel \jorl b \jorl a a \jorr \jorr \jorl a \jorl c \liel a a \lier \jorr \jorr \lier 
  +  \liel \jorl b \jorl a a \jorr \jorr \jorl c \jorl a \liel a a \lier \jorr \jorr \lier 
  -  \liel \jorl c \jorl a a \jorr \jorr \jorl a \jorl b \liel a a \lier \jorr \jorr \lier 
  -  \liel \jorl c \jorl a a \jorr \jorr \jorl b \jorl a \liel a a \lier \jorr \jorr \lier 
  \\
  &
  -  \liel \jorl b \jorl a a \jorr \jorr \jorl \liel a a \lier \jorl a c \jorr \jorr \lier 
  +  \liel \jorl c \jorl a a \jorr \jorr \jorl \liel a a \lier \jorl a b \jorr \jorr \lier 
  \\ \midrule
  43 &
     \jorl \liel a \liel b b \lier \lier \liel \liel a b \lier \jorl a a \jorr \lier \jorr 
  +2 \jorl \liel b \liel a b \lier \lier \liel \liel a b \lier \jorl a a \jorr \lier \jorr 
  -2 \liel \jorl b \jorl a a \jorr \jorr \jorl \liel a b \lier \liel a b \lier \jorr \lier 
  -  \liel \jorl a \jorl a a \jorr \jorr \liel \jorl a b \jorr \jorl b b \jorr \lier \lier 
  \\ \midrule
  421 &
     \liel \jorl a \jorl b b \jorr \jorr \jorl a \jorl c \liel a a \lier \jorr \jorr \lier 
  +  \liel \jorl a \jorl b b \jorr \jorr \jorl c \jorl a \liel a a \lier \jorr \jorr \lier 
  -2 \liel \jorl b \jorl a b \jorr \jorr \jorl a \jorl c \liel a a \lier \jorr \jorr \lier 
  -2 \liel \jorl b \jorl a b \jorr \jorr \jorl c \jorl a \liel a a \lier \jorr \jorr \lier 
  \\
  &
  +2 \liel \jorl c \jorl a a \jorr \jorr \jorl b \jorl b \liel a a \lier \jorr \jorr \lier 
  -  \liel \jorl a \jorl b b \jorr \jorr \jorl \liel a a \lier \jorl a c \jorr \jorr \lier 
  +2 \liel \jorl b \jorl a b \jorr \jorr \jorl \liel a a \lier \jorl a c \jorr \jorr \lier 
  -  \liel \jorl c \jorl a a \jorr \jorr \jorl \liel a a \lier \jorl b b \jorr \jorr \lier 
  \\ \midrule
  3^21 &
     \liel \jorl a \jorl b b \jorr \jorr \jorl b \jorl c \liel a a \lier \jorr \jorr \lier 
  +  \liel \jorl a \jorl b b \jorr \jorr \jorl c \jorl b \liel a a \lier \jorr \jorr \lier 
  -2 \liel \jorl a \jorl b c \jorr \jorr \jorl b \jorl b \liel a a \lier \jorr \jorr \lier 
  -2 \liel \jorl b \jorl a b \jorr \jorr \jorl b \jorl c \liel a a \lier \jorr \jorr \lier 
  \\
  &
  -2 \liel \jorl b \jorl a b \jorr \jorr \jorl c \jorl b \liel a a \lier \jorr \jorr \lier 
  +2 \liel \jorl b \jorl a c \jorr \jorr \jorl b \jorl b \liel a a \lier \jorr \jorr \lier 
  +2 \liel \jorl c \jorl a b \jorr \jorr \jorl b \jorl b \liel a a \lier \jorr \jorr \lier 
  -  \liel \jorl a \jorl b b \jorr \jorr \jorl \liel a a \lier \jorl b c \jorr \jorr \lier 
  \\
  &
  +  \liel \jorl a \jorl b c \jorr \jorr \jorl \liel a a \lier \jorl b b \jorr \jorr \lier 
  +2 \liel \jorl b \jorl a b \jorr \jorr \jorl \liel a a \lier \jorl b c \jorr \jorr \lier 
  -  \liel \jorl b \jorl a c \jorr \jorr \jorl \liel a a \lier \jorl b b \jorr \jorr \lier 
  -  \liel \jorl c \jorl a b \jorr \jorr \jorl \liel a a \lier \jorl b b \jorr \jorr \lier 
  \end{array}
  }
  \]
  \caption{The simplest new Jordan-Jordan identities in degree 7}
  \label{JJnew7firstidentities}
  \end{figure}

  \begin{figure}[ht]
  \small
  \[
  \boxed{
  \begin{array}{lrl}
  \lambda &\; \text{new} &\; \text{number of nonzero entries in coefficient vector for each new identity}
  \\
  \midrule
  7  &\;  2 &\;   
  4{,}4 
  \\ 
  61 &\;  9 &\;   
  3{,}6{,}6{,}11{,}12{,}14{,}14{,}16{,}17 
  \\ 
  52 &\; 17 &\;   
  8{,}18{,}18{,}19{,}19{,}21{,}30{,}30{,}32{,}32{,}32{,}33{,}33{,}34{,}38{,}43{,}44 
  \\
  51^2 &\; 20 &\;   
  6{,}7{,}13{,}13{,}14{,}20{,}20{,}20{,}22{,}24{,}26{,}26{,}27{,}28{,}29{,}30{,}38{,}41{,}45{,}51 
  \\ 
  43 &\; 15 &\; 
  9{,}10{,}16{,}16{,}20{,}21{,}21{,}23{,}27{,}28{,}34{,}35{,}41{,}43{,}45 
  \\
  421 &\; 20 &\;  
  15{,}27{,}31{,}33{,}35{,}41{,}42{,}48{,}51{,}51{,}72{,}90{,}102{,}104{,}119{,}121{,}124{,}128{,}128{,}129 
  \\
  3^21 &\; 11 &\;   
  9{,}10{,}13{,}13{,}36{,}36{,}40{,}48{,}59{,}71{,}73
  \end{array}
  }
  \]
  \caption{Nonzero matrix entries for new JJ identities in degree 7}
  \label{JJnew7info}
  \end{figure}

%%%%%%%%%%%%%%%%%%%%%%%%%%%%%%%%%%%%%%%%%%%%%%%%%%%%%%%%%%%%%%%%%%%%%%%%%%%%%%%%%%%%%%%%%%%%%%%%%%%%%%%%%%%%
%%%%%%%%%%%%%%%%%%%%%%%%%%%%%%%%%%%%%%%%%%%%%%%%%%%%%%%%%%%%%%%%%%%%%%%%%%%%%%%%%%%%%%%%%%%%%%%%%%%%%%%%%%%%
%%%%%%%%%%%%%%%%%%%%%%%%%%%%%%%%%%%%%%%%%%%%%%%%%%%%%%%%%%%%%%%%%%%%%%%%%%%%%%%%%%%%%%%%%%%%%%%%%%%%%%%%%%%%

\section{Two Jordan products: nonlinear identities in degree 7} \label{sectionJJ7proof}

In this final section we explain in more detail our computational methods based on the expansion map 
\eqref{expansionmap} and the representation theory of the symmetric group \eqref{Sndecomp}.
We focus on the problem of finding explicit nonlinear forms of the 94 new irreducible multilinear
identities in degree 7 for two Jordan products in the free associative interchange algebra
(Lemma \ref{JJdegree7}).
Our main example will be partition $\lambda = 421$ which has 20 new identities (Figure \ref{JJ7mults}).
Together with its conjugate $\lambda^\ast = 321^2$, these two partitions correspond to the largest 
(35-dimensional) irreducible representations of $S_7$.

\subsection*{Combining the expansion matrix with representation theory}

We use the block matrix $B_n$ of equation \eqref{blockmatrix} to find all new identities in degree $n = 7$ 
for two Jordan products. 
However, we are no longer using all multilinear monomials obtained from all permutations of the variables
in the AA and JJ types, but rather the AA and JJ types together with the representation theory of the 
symmetric group.
There are 5040 permutations but only 35 standard tableaux for $\lambda = 421$, and so the matrices 
we use will be $5040/35 = 144$ times smaller than if we were using all multilinear monomials.
(This ratio is even greater for the smaller representations.)

In degree 7 there are 1806 AA types, 688 consequences of \eqref{interchange}, 616 JJ types, and 
the $S_7$-module $[421]$ has dimension 35.
Therefore, in the expansion matrix for $\lambda = 421$:
  \begin{itemize}
  \item
  The upper left block $\Xi$, which contains the consequences of the interchange identity, 
  has $688 \cdot 35 = 24080$ rows and $1806 \cdot 35 = 63210$ columns. 
  \item
  The lower left block $X$, which contains the expansions of the JJ monomials, 
  has $616 \cdot 35 = 21560$ rows and $1806 \cdot 35 = 63210$ columns. 
  \item
  The lower right identity matrix has $616 \cdot 35 = 21560$ rows and columns.
  \item
  The upper right zero matrix has $688 \cdot 35 = 24080$ rows, $616 \cdot 35 = 21560$ columns.
  \end{itemize}
Altogether, this expansion matrix has 45640 rows and 84770 columns.

To compute efficiently with a matrix of this size (3868902800 entries, almost 4 gigabytes at one 
byte per entry) we cannot use rational arithmetic, so we must use modular arithmetic followed by 
rational reconstruction.
Fortunately, we were able to complete these calculations using only single primes, $p = 101$ and
$p = 1000003$; we did not need the Chinese remainder theorem.

We compute the RCF of the expansion matrix and determine the rows whose leading 1s lie in 
the right side of the matrix, columns $j > 63210$.
There are 17752 such rows (``all'' in Figure \ref{JJ7mults}), which form a block of size 
$11752 \times 21560$, ignoring the zeros in the left side of the matrix ($j \le 63210$).
The rows of this block represent all multilinear identities in partition 421 for two Jordan products 
in degree 7.

To determine which rows of the block represent new identities in degree 7, we compute the RCF of 
the matrix representing the consequences of the known identities in degree $\le 6$.
There are 1520 symmetries of the JJ types in degree 7: multilinear identities of the form $m - m' \equiv 0$, 
where $m$ is a JJ type with the identity permutation of the variables, and $m'$ is the same JJ type 
with a permutation of order 2: these are the consequences of commutativity of the Jordan products.
There are 2676 consequences in degree 7 of (the linearizations of) the two Jordan identities in degree 4, 
and 2212 consequences of the new JJ identities from degrees 4--6.
All these identities are linear combinations of the JJ types with various permutations of the variables, 
and so their components for $\lambda = 421$ can be represented by a matrix with $616 \cdot 35 = 21560$ columns.
The total number of these old identities is 6408, and so we would require $6408 \times 35 = 224280$ rows
to process them together.
But the rank can never be greater than the number of columns, so we save memory by creating a matrix 
with $21560 + 3500 = 25060$ rows, and processing the old identities 100 at a time in 65 groups.
At the end of this iteration, we have a matrix in RCF with rank 17732 (``old'' in Figure \ref{JJ7mults});
discarding zero rows gives size $17732 \times 21560$.

Comparing the results so far, we see that there are $17752 - 17732 = 20$ new JJ identities in degree 7 
(``new'' in Figure \ref{JJ7mults}).
The row space of the second matrix (old identities) is a subspace of the row space of the first matrix
(all identities).
The new identities are represented by the rows of the first matrix whose leading 1s occur in columns 
for which the second matrix has no leading 1.

For $\lambda = 421$ the simplest of the rows representing new identities has 15 nonzero entries 
(see Figure \ref{JJnew7info}).
Each column index $j$ can be written as $j = 35(t-1) + \ell$ where $t$ is the JJ type and 
$\ell$ is the column index in the corresponding $35 \times 35$ representation matrix.
For each of the 15 nonzero entries, Figure \ref{JJnew7example} displays the indices $j$, $t$, $\ell$ 
and the entry $c$ (modulo 101); only two JJ types occur, $t = 595$ and $t = 611$.
In this case, rational reconstruction is very easy: we use symmetric representatives modulo 101 to 
replace $\{ 100, 99, 97 \}$ by $\{ -1, -2, -4 \}$ respectively, and interpret the resulting coefficients 
as integers.
According to the decomposition of the group algebra $\mathbb{F} S_n$ in equation \eqref{Sndecomp}, 
each nonzero matrix entry represents a scalar multiple of an element $R^{-1}(E_{k\ell})$ in the
group algebra where $E_{k\ell}$ is a $35 \times 35$ matrix unit.
The elements $R^{-1}(E_{k\ell})$ are defined, in terms of the standard tableaux for $\lambda$, 
to be the product of two factors: the sum over all row permutations and the alternating sum over all 
column permutations.
For $\lambda = 421$ with $\lambda^\ast = 321^2$, each of the elements $R^{-1}(E_{k\ell})$
has $4!2!3!2! = 576$ terms.
So this very sparse matrix row represents a multilinear identity with $15 \cdot 576 = 8640$ terms.
But this multilinear identity has symmetries determined by the partition $\lambda = 421$, 
and we can use these symmetries to find an equivalent but much smaller nonlinear identity.

  \smallskip
  
  \begin{figure}[ht]
  \small
  \[
  \boxed{
  \begin{array}
  {c@{\;\;}|
  c@{\quad}c@{\quad}c@{\quad}c@{\quad}c@{\quad}c@{\quad}c@{\quad}c@{\quad}c@{\quad}c@{\quad}
  c@{\quad}c@{\quad}c@{\quad}c@{\quad}c@{\quad}}
  j & 20792 & 20794 & 20797 & 20798 & 20815 & 20816 & 20819 & 20820 & 20821 & 20822 \\
  t & 595 & 595 & 595 & 595 & 595 & 595 & 595 & 595 & 595 & 595 \\
  \ell & 2 & 4 & 7 & 8 & 25 & 26 & 29 & 30 & 31 & 32 \\
  c & 1 & 1 & 99 & 1 & 2 & 97 & 2 & 97 & 1 & 1 \\ 
  \midrule
  j & 21354 & 21357 & 21372 & 21379 & 21380 \\
  t & 611 & 611 & 611 & 611 & 611 \\
  \ell & 4 & 7 & 22 & 29 & 30 \\
  c & 100 & 1 & 100 & 4 & 99 
  \end{array}
  }
  \]
  \caption{Matrix row representing simplest new identity for $\lambda = 421$}
  \label{JJnew7example}
  \end{figure}

\subsection*{Combining the expansion matrix with nonlinear identities}

The nonzero matrix entries in the rows representing the 20 new JJ identities for $\lambda = 421$
belong to columns corresponding to only 67 JJ types (out of the total of 616);
we call these the JJ subtypes.
The expansions of the JJ monomials for the JJ subtypes contain terms which involve only 374 AA types 
(out of the total of 1806).
However, to this subset of AA types $t$ we must add those AA types $t'$ for which some consequence
of the interchange identity involves both $t$ and $t'$.
This closure process converges after two iterations, leaving us with 530 AA types which must be 
included in our computations; we call these the AA subtypes.
There are 416 interchange consequences (out of the total of 688) in which both terms belong to the 
AA subtypes.

To find the new nonlinear identities, we use the expansion matrix with monomials;
we are no longer using representation theory. 
To obtain a basis of the domain of the expansion map, we apply each of the 67 JJ subtypes to all 
105 permutations of the nonlinear monomial $m = a^4b^2c$ corresponding to $\lambda = 421$.
We normalize the resulting JJ monomials using commutativity of the Jordan products, and obtain a 
set of 2417 nonlinear JJ monomials.
Similarly, we apply the 530 AA subtypes to the permutations of $m$ to obtain a basis of 
the codomain of the expansion map, and convert the 416 interchange consequences to a nonlinear form.
The resulting expansion matrix $E$ has 
$416 \cdot 105 + 2417 = 46097$ rows and $530 \cdot 105 + 2417 = 58067$ columns,
and the same block structure as equation \eqref{blockmatrix}.

We compute $R = \mathrm{RCF}(E)$ using a large prime $p = 1000003$ to facilitate rational reconstruction.
The matrix $R$ has rank 43577; the first 42787 rows have leading 1s in the left side 
(columns $j \le 55650$), and the remaining 790 rows have leading 1s in the right side: 
those columns are labeled by the 2417 nonlinear JJ monomials.
We extract the lower right block of size $790 \times 2417$ whose rows represent all the JJ identities
in degree 7 which have a permutation of $m$ as the underlying variables in each term.
We find that this block contains only 685 distinct residues modulo $p$ (a very small subset).

To do rational reconstruction, we consider all denominators $d = 2^i 3^j 5^k 7^\ell$
with prime factors $\le  7$ (the degree of the identities); each exponent $i,j,k,\ell$ has 
lower bound 0 and a suitable upper bound (we used 9, 3, 1, 1 respectively).
The theoretical justification is that in the explicit form of isomorphism \eqref{Sndecomp},
all the denominators are divisors of $n!$.
For each value of $d$, we multiply the 685 residues by $d$ and reduce again using symmetric 
representatives modulo $p$, obtaining a subset of integers contained in the interval $[-L,U]$ 
where $-L$ and $U$ are the min and max of the representatives.
The smallest interval (minimum of $L+U$) is obtained for $d = 2^6 \cdot 3 = 192$;
in this case the representatives have absolute value $\le 6528$ (which is most likely
the smallest possible).

At this point, we assume that we have found the correct integer values of the coefficients of 
the nonlinear JJ identities.
If we have made a mistake here by choosing an incorrect value of $d$ then we expect that obvious 
errors will appear when we attempt to confirm the identities independently using representation theory.
To say the same thing another way: we assume that if we were able to perform this computation 
using rational arithmetic, then $d = 192$ would be the LCM of the denominators of the entries 
of $\mathrm{RCF}(E)$.

If the number of integer vectors is not too large, roughly $< 500$, then at this point we can 
apply the LLL algorithm for lattice basis reduction \cite{BP1} to reduce the size of the coefficients.
In any case, we then sort the integer vectors first by increasing number of nonzero components, 
then by increasing Euclidean norm, and finally by increasing maximum nonzero component (in absolute value).

The final step is to process these identities using representation theory, and for this there are
two options: (i) linearize the identities, which increases the number of terms by a factor of 
$4!2! = 48$, or (ii) use the linearization operator \cite{BP2} which reduces the linearization of
each monomial to a single matrix multiplication.
In either case, we use representation theory as before to determine which of the 790 identities 
in the sorted list are new, where as usual ``new'' means not a consequence of the known identities 
of lower degree, but now also not a consequence of the previous identities in the list.

Figure \ref{JJnew7firstidentities} displays the first new nonlinear JJ identity for each of the 7
partitions in degree 7 for which the module of new identities is non-trivial; to save space we omit
``$\equiv 0$'' in these identities.
The other new nonlinear identities are available in an ancillary file attached to the arXiv version 
of this paper; see \cite{BMarXiv} for details.

%%%%%%%%%%%%%%%%%%%%%%%%%%%%%%%%%%%%%%%%%%%%%%%%%%%%%%%%%%%%%%%%%%%%%%%%%%%%%%%%%%%%%%%%%%%%%%%%%%%%%%%%%%%%
%%%%%%%%%%%%%%%%%%%%%%%%%%%%%%%%%%%%%%%%%%%%%%%%%%%%%%%%%%%%%%%%%%%%%%%%%%%%%%%%%%%%%%%%%%%%%%%%%%%%%%%%%%%%
%%%%%%%%%%%%%%%%%%%%%%%%%%%%%%%%%%%%%%%%%%%%%%%%%%%%%%%%%%%%%%%%%%%%%%%%%%%%%%%%%%%%%%%%%%%%%%%%%%%%%%%%%%%%

\section*{Acknowledgements}

We thank the referee for numerous helpful comments and especially for suggesting that we make available 
online the polynomial identities which were too large for publication in a journal article.
The new nonlinear identities of degree 7 in two cases (Lie-Jordan and Jordan-Jordan) are available in 
ancillary files attached to the arXiv version \cite{BMarXiv}.
Murray Bremner was supported by a Discovery Grant from the Natural Sciences and Engineering Research Council of 
Canada (NSERC).
Sara Madariaga was supported by a Postdoctoral Fellowship from the Pacific Institute for Mathematical Sciences
(PIMS).

\end{document}